\numberwithin{equation}{section}
\newtheorem{questno*}{Question}
\newtheorem{theo}{Theorem}
\newtheorem*{theo*}{Theorem}
\newtheorem{prop}{Proposition}
\newtheorem{defi}{Definition}
\newtheorem*{defi*}{Definition}
\theoremstyle{remark}
\newtheorem{Remark}{Remark}
\newcommand{\N}{\mathbb{N}}
\newcommand{\Q}{\mathbb{Q}}
\newcommand{\R}{\mathbb{R}}
\renewcommand{\C}{\mathbb{C}}
\newcommand{\K}{\mathbb{K}}
\newcommand{\Qbar}{\overline{\mathbb Q}}
\newcommand{\etoile}{^\star}
\newcommand{\al}{\alpha}
\newcommand{\li}{\textup{Li}}
\newcommand{\eps}{\varepsilon}
\newcommand{\h}{{\bf H}}
\newcommand{\devasy}{\approx}
\newcommand{\hada}{\star}
\newcommand{\GG}{{\bf G}}
\newcommand{\calFbar}{\overline{\cal F}}
\newcommand{\horsspec}{{\cal S}}
\def\dd{\textup{d}}
\newcommand{\calD}{{\cal {D}}} 
\newcommand{\nvsing}{\Sigma}
\def\al{\alpha}
\newcommand{\deltarho}{\Delta_\rho}
\def\Gh{\widehat{\Gamma}}
\newcommand{\Netoile}{\mathbb N ^\ast}
\begin{document}

 \selectlanguage{english}

\title{On Siegel's problem for $E$-functions}
\date\today
\author{S. Fischler and T. Rivoal}
\maketitle

\begin{abstract} Siegel defined in 1929 two classes of power series, the $E$-functions and $G$-functions, which  generalize the Diophantine properties of the exponential and logarithmic functions respectively. In 1949, he asked whether any $E$-function can be represented as a polynomial with algebraic coefficients in a finite number of hypergeometric $E$-functions of the form ${}_pF_q(z^{q-p+1})$, $q\ge p\ge 1$, with rational parameters. The case of $E$-functions of differential order less than 2 was settled in the affirmative by Gorelov in 2004, but Siegel's question is open for higher order. We prove here that if Siegel's question has a positive answer, then the ring ${\bf G}$ of values taken by analytic continuations of $G$-functions at algebraic points must be a subring of the relatively ``small'' ring $\h$ generated by algebraic numbers, $1/\pi$ and the values of the derivatives of the Gamma function at rational points. Because that inclusion seems unlikely (and contradicts standard conjectures), this points towards a negative answer to Siegel's question in general. As intermediate steps, we first prove that any element of ${\bf G}$ is a coefficient of the asymptotic expansion of a suitable $E$-function, which completes previous results of ours. We then prove (in two steps) that the coefficients of the asymptotic expansion of an hypergeometric $E$-function with rational parameters are in $\h$. Finally, we prove a similar result for $G$-functions. 

\end{abstract}

\section{Introduction}

Siegel \cite[p. 223]{siegel} introduced in 1929 the notion of $E$-function as a generalization of the exponential and Bessel functions. We fix an embedding of $\Qbar$ into $\mathbb C$.
\begin{defi}\label{defi1}
A power series $F(z)=\sum_{n=0}^{\infty} \frac{a_n}{n!} z^n \in \Qbar[[z]]$ is an $E$-function if 
\begin{enumerate}
\item[$(i)$] $F(z)$ is solution of a non-zero linear differential equation with coefficients in 
$\Qbar(z)$.
\item[$(ii)$] There exists $C>0$ such that for any $\sigma\in \textup{Gal}(\Qbar/\mathbb Q)$ and any $n\ge 0$, $\vert \sigma(a_n)\vert \leq C^{n+1}$.
\item[$(iii)$] There exists $D>0$ and a sequence of integers $d_n$, with $1\le d_n \leq D^{n+1}$, such that
$d_na_m$ are algebraic integers for all~$m\le n$.
\end{enumerate}
\end{defi}
Siegel's original definition was in fact slightly more general than above and we shall make some remarks about this in \S\ref{subsec:comments}. 
Note that $(i)$ implies that the $a_n$'s all lie in a certain number field $\K$, so that in $(ii)$ there are only finitely many Galois conjugates $\sigma(a_n)$ of $a_n$ to consider, with $\sigma\in \textup{Gal}(\mathbb K/\mathbb Q)$ (assuming for simplicity that $\K$ is a Galois extension of $\Q$). 
$E$-functions are entire, and they form a ring stable under $\frac{d}{dz}$ and $\int_0^z$. A power series $\sum_{n=0}^{\infty} a_n z^n \in \Qbar[[z]]$ is said to be a $G$-function if $\sum_{n=0}^{\infty} \frac{a_n}{n!} z^n$ is an $E$-function. Algebraic functions over $\Qbar(z)$ regular at 0 and  polylogarithms (defined in \S\ref{subsecH}) are examples of $G$-functions.

The generalized hypergeometric series is defined as 
\begin{equation*}
{}_pF_{q}
\left[
\begin{matrix}
a_1, \ldots, a_p
\\
b_1, \ldots, b_q
\end{matrix}
\,; z \right] := \sum_{n=0}^\infty \frac{(a_1)_n\cdots (a_p)_n}{(1)_n(b_1)_n\cdots (b_q)_n} z^n
\end{equation*}
where $p,q\ge 0$ and $(a)_0:=1$, $(a)_n:=a(a+1)\cdots (a+n-1)$ if $n\ge 1$. The parameters $a_j$ and $b_j$ are in $\mathbb C$, with the restriction that $b_j\notin \mathbb Z_{\le 0}$ so that $(b_j)_n\neq 0$ for all $n\ge 0$. We shall also denote it by 
${}_pF_{q}[a_1, \ldots, a_p; b_1, \ldots, b_q; z]$. The special case $p=q$ is called the  {\em confluent} hypergeometric series.

Siegel proved that, for any integers $q\ge p\ge 1$, the series 
\begin{equation}
\label{eq:Efnhyp}
{}_{p}F_q \left[ 
\begin{matrix}
a_1, \ldots, a_{p}
\\
b_1, \ldots, b_p
\end{matrix}
; z^{q-p+1}
\right] 
\end{equation}
is an $E$-function (in the sense of this paper) when $a_j\in \mathbb Q$ and $b_j\in \mathbb Q\setminus \mathbb Z_{\le 0}$ for all $j$. He called them {\em hypergeometric $E$-functions}. The simplest examples are $\exp(z)=\sum_{n=0}^\infty \frac{z^n}{n!}={}_1F_1[1;1;z]$ and Bessel function $J_0(z):=\sum_{n=0}^{\infty} \frac{(iz/2)^{2n}}{n!^2}= {}_1F_2 [
1,1
; 
1
; (iz/2)^2]$. 
 If $a_j\in \mathbb Z_{\le 0}$ for some $j$, then the series reduces to a polynomial. Any polynomial with coefficients in $\Qbar$ of hypergeometric functions of the form ${}_{p}F_p[a_1, \ldots, a_p; b_1,\ldots, b_p ;\lambda z]$, with parameters
$a_j, b_j \in \mathbb Q$ and $\lambda\in \Qbar$, is an $E$-function.

The $E$-functions 
$$
L(z):=\sum_{n=0}^\infty \bigg(\sum_{k=0}^n \binom{n}{k}\binom{n+k}{n}\bigg)\frac{z^n}{n!}, \quad  H(z):=\sum_{n=0}^\infty \bigg(\sum_{k=1}^n \frac{1}{k}\bigg)\frac{z^n}{n!}
$$
are not of the hypergeometric type \eqref{eq:Efnhyp}, even with $z$ changed to $\lambda z$ for some $\lambda\in \Qbar$, but we have  
\begin{align*}
L(z)&= e^{(3-2\sqrt{2})z} \cdot {}_1F_1\big[1/2;1;4\sqrt{2}z \big],
\\ 
H(z)&=ze^{z} \cdot {}_2F_2\big[1,1;2,2;-z \big]. 
\end{align*} 
These puzzling identities, amongst others, naturally suggest to study further the role played by hypergeometric series in the theory of $E$-functions. In fact, Siegel had already stated in \cite{siegellivre} a problem that we reformulate as the following question.

\begin{questno*}[Siegel] 
Is it possible to write any $E$-function as a polynomial with coefficients in $\Qbar$ of hypergeometric $E$-functions of the form ${}_{p}F_q[a_1, \ldots, a_p; b_1,\ldots, b_p ;\lambda z^{q-p+1}]$, with parameters
$a_j, b_j \in \mathbb Q$ and $\lambda\in \Qbar$?
\end{questno*}

It must be understood that $\lambda$, $p,q$ and $q-p$ can take various values in the polynomial. Siegel's original statement is given in \S\ref{subsec:comments} along with some comments. 
Gorelov \cite[p. 514, Theorem~1]{gorelov} 
proved that the answer to Siegel's question is positive if the $E$-function (in Siegel's original sense) satisfies a linear differential equation with coefficients in $\Qbar(z)$ of order $\le 2$. He used the pioneering results of Andr\'e \cite{YA1} on $E$-operators. A version of Gorelov's theorem was reproved in \cite{rrsiegel} for $E$-functions as in Definition \ref{defi1} with a method also based on Andr\'e's results, but somewhat different in the details. It seems difficult to generalize any one of these two approaches when the order is $\ge 3$, though Gorelov~\cite{gorelov2} also obtained further  results in the case of $E$-functions solution of a linear inhomogeneous differential equation of order 2 with coefficients in $\Qbar(z)$, like $H(z)$ above.

\medskip

In this paper, we adopt another point of view on Siegel's question. Let us first define two subrings of $\mathbb C$; the former was introduced and studied in \cite{gvalues}.
\begin{defi} \hspace{0.1cm}
${\bf G}$ denotes the ring of $G$-values, i.e. the values taken at algebraic points by the analytic continuations of all $G$-functions. 

${\bf H}$ denotes the ring generated by $\Qbar$, $1/\pi$ and the values $\Gamma^{(n)}(r)$, $n\ge 0$, $r\in \mathbb Q\setminus \mathbb Z_{\le 0}$. 
\end{defi}
Here, $\Gamma(s):=\int_0^\infty t^{s-1}e^{-t}dt$ is the usual Gamma function that can be analytically continued to $\mathbb C \setminus\mathbb Z_{\le 0}$. We can now state our main result. 

\begin{theo} \label{thimplication}
At least one of the following statements is true:
\begin{enumerate}
\item[$(i)$] ${\bf G} \subset \h$; 
\item[$(ii)$] Siegel's question has a negative answer.
\end{enumerate}
\end{theo}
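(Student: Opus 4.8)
The plan is to prove Theorem~\ref{thimplication} by contraposition: assume that Siegel's question has a \emph{positive} answer, and then deduce the inclusion $\GG \subset \h$. The argument proceeds in three linked stages, which are precisely the intermediate results announced in the abstract. First, I would establish that every element of $\GG$ arises as a coefficient in the asymptotic expansion (at $+\infty$, along a suitable direction) of some $E$-function; this completes earlier work of the authors relating $G$-values to asymptotics of $E$-functions, and the key tool is Andr\'e--Chudnovsky--Katz theory together with the known local structure of $E$-operators at infinity, from which the connection constants (which are $G$-values up to algebraic factors and powers of $\pi$, hence in $\GG$ by the stability properties of that ring) appear as the coefficients of the formal asymptotic series. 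So it suffices to show: \emph{if Siegel's question has a positive answer, then every coefficient of the asymptotic expansion of an arbitrary $E$-function lies in $\h$.}

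Second, under the positive-answer hypothesis, an arbitrary $E$-function is a polynomial with $\Qbar$-coefficients in hypergeometric $E$-functions of the form ${}_pF_q[\asoul;\bsoul;\lambda z^{q-p+1}]$ with rational parameters. Since asymptotic expansions multiply and add in the obvious way (the product of two asymptotic expansions of the relevant Gevrey type is the asymptotic expansion of the product, after grouping by the finitely many exponential scales $e^{\mu z^{1/\kappa}}$ that occur), and $\h$ is a ring containing $\Qbar$, it is enough to prove that the coefficients of the asymptotic expansion of a \emph{single} hypergeometric $E$-function ${}_pF_q[\asoul;\bsoul;\lambda z^{q-p+1}]$ with $a_j,b_j\in\Q$, $\lambda\in\Qbar$, all lie in $\h$. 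This is the heart of the matter. Here I would use the explicit Mellin--Barnes integral representation of ${}_pF_q$: writing the function as a contour integral of a ratio of Gamma factors $\Gamma(s+a_j)/\Gamma(s+b_j)$ times $(\lambda z^{\cdots})^{-s}$ or its reciprocal, one shifts the contour and picks up residues at the poles, which are (possibly confluent) poles of the Gamma functions at points $-a_j-\Z_{\ge 0}$. The resulting residues are finite $\Q$-linear combinations of products of values $\Gamma^{(n)}(r)$ and $1/\Gamma(r')$ at rational $r,r'$ — and since $1/\Gamma(r')$ is itself (via the reflection formula $\Gamma(r')\Gamma(1-r')=\pi/\sin(\pi r')$ and $\Gamma$'s functional equation) a $\Qbar$-combination of $1/\pi$ and $\Gamma$-values at rationals — every such residue lies in $\h$. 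The asymptotic expansion at infinity is built from exactly these residue contributions (the "other" set of poles), so its coefficients are in $\h$; doing this carefully across the finitely many exponential scales and fractional powers that appear when $q>p$ is the technical bulk of stage two, but it is bookkeeping on top of classical Mellin--Barnes asymptotics (Braaksma's theorem and its refinements).

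Third, once the claim is known for $E$-functions, I would transfer it to $G$-functions by the formal Laplace/Borel correspondence already implicit in Definition~\ref{defi1}: if $\sum a_n z^n$ is a $G$-function then $\sum (a_n/n!) z^n$ is an $E$-function, and the asymptotics of the latter control the behavior of the former near its finite singularities; running stage one backwards, one sees that the $G$-values that stage one produced are exactly accounted for, so no new values outside $\h$ are needed. Combining the three stages: every element of $\GG$ is an asymptotic coefficient of an $E$-function (stage one), which under the positive-answer hypothesis is a $\Qbar$-polynomial in rational hypergeometric $E$-functions (Siegel), whose asymptotic coefficients lie in $\h$ (stage two, with the product/sum reduction), so $\GG\subset\h$, which is statement $(i)$. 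The main obstacle I anticipate is stage two in the \emph{confluent} and mixed range $q\ge p$, where the hypergeometric $E$-function has several exponential pieces $e^{\mu z^{1/(q-p+1)}}$ and the Mellin--Barnes analysis must be carried out scale by scale, keeping track of how confluences of Gamma poles produce derivatives $\Gamma^{(n)}(r)$ (and thus genuinely need all of $\h$, not just $\Qbar[1/\pi,\Gamma(\Q)]$); ensuring that the Stokes/connection data introduced in stage one do not secretly introduce transcendental constants outside $\h$ is the delicate point, and it is handled by the explicit Barnes-integral formulas rather than by abstract $E$-operator theory alone.
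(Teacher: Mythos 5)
Your overall skeleton is exactly the paper's: argue by contraposition, show every $\xi\in\GG$ is a coefficient of the large-sector asymptotic expansion of some $E$-function (this is Theorem~\ref{thdevG}, proved essentially as you sketch, via \cite[Theorem 1]{gvalues} and the explicit expansion formula of \cite[Theorem 5]{ateo}), show the expansions of the hypergeometric building blocks have coefficients in $\h$, and conclude by the ring structure of $\h$ and uniqueness of the expansion. Your stage two is also the paper's method in the confluent case $p=q$: Mellin--Barnes residues at the (possibly multiple) poles $-a_j-\Z_{\ge 0}$, giving derivatives of Gamma at rationals and reciprocals handled by the reflection formula.

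However, there is a genuine gap in your stage two for $q>p$, and it is precisely the point the paper is at pains to circumvent. The classical Mellin--Barnes/Barnes--Wright--Braaksma asymptotics give the expansion of ${}_pF_q(w)$ in sectors of opening $<2\pi$ in the variable $w$; after the substitution $w=\lambda z^{q-p+1}$ these become sectors of opening $<2\pi/(q-p+1)\le\pi$ in $z$ when $q>p$. That is \emph{not} a large sector in the sense of Definition~\ref{defiasy} (opening $>\pi$), and in a sector of opening $\le\pi$ the expansion is no longer unique: exponentially small terms (e.g.\ a stray $e^{\rho z}$ with $\Re(\rho e^{i\theta})<0$ throughout the small sector) can be added freely, so ``the coefficients of the asymptotic expansion'' are not even well defined there, and the final comparison with the large-sector expansion of $E(z)$ that carries $\xi$ breaks down. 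Dismissing this as ``bookkeeping on top of classical Mellin--Barnes asymptotics'' understates the difficulty; indeed the authors explicitly state that they do \emph{not} know explicit formulas for the coefficients when $q>p$. The paper's workaround is an indirect two-step argument: first prove the $p=q$ case explicitly (where the classical expansion does hold in a large sector), then use it together with \cite[Theorem 5]{ateo} to show that the connection constants of the Laplace transform $(1/z)\,{}_{p+1}F_p(1/z)$ at its singularities $0$ and $1$ lie in $\h$ (Proposition~\ref{connecconst}, by a double induction on the exponents $t_j^\rho$ and the log-powers $k$), and finally transfer this local information through the substitution $z\mapsto z^r/r^r$ and feed it back into the formula of Theorem~\ref{thjep}, which is what guarantees existence and uniqueness of the large-sector expansion of ${}_pF_q(z^{q-p+1})$ in the first place. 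Your stage three, as written (a Laplace/Borel transfer back to $G$-functions), is not needed for this theorem and conflates it with the separate $G$-function analogue treated in \S\ref{sec:siegelprobG}; the actual conclusion is immediate once stage two is secured.
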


We provide in \S \ref{subsecH} another description of the ring $\h$, and explain there why the inclusion ${\bf G} \subset \h$ (and therefore a positive answer to Siegel's question) seems very unlikely; as  Y. Andr\'e, F. Brown and J. Fres\'an pointed out to us, this inclusion contradicts standard conjectures.

\medskip

The paper is organized as follows. In \S\ref{sec:comments}, we comment on Siegel's original formulation of his problem and make some remarks on the ring $\h$. In \S\ref{sec:Gcoeffs}, we prove that any element of ${\bf G}$ is a coefficient of the asymptotic expansion of a suitable $E$-function (Theorem \ref{thdevG}). In \S\ref{sec:2}, we prove that the coefficients of the asymptotic expansion of any hypergeometric series ${}_pF_p$ with rational parameters are in $\h$; then we generalize
this result to any  ${}_pF_q(z^{q-p+1})$ in \S \ref{sec:2bis}. We complete the proof of Theorem \ref{thimplication} in \S\ref{sec:fin} by comparing the results of the previous sections. Finally,  we consider in \S\ref{sec:siegelprobG} an analogous problem for $G$-functions and prove a similar result to Theorem \ref{thimplication}. We emphasize that the proof of Theorem~\ref{thimplication} uses, in particular, various results obtained in~\cite{gvalues} and~\cite{ateo}, the proofs of which are crucially based on a deep theorem due to Andr\'e, Chudnovsky and Katz on the structure of non-zero minimal differential equations satisfied by $G$-functions; see~\S\ref{ssec:Gconstconnec} and the references given there.

\medskip

\noindent {\bf Acknowledgements.} We warmly thank Yves Andr\'e, Francis Brown and Javier Fres\'an for their comments on a previous version of this paper, and in particular for explaining to us why the inclusion ${\bf G} \subset \h$ cannot hold under the standard conjectures on (exponential) periods. We also thank Javier Fres\'an and Peter Jossen for bringing to  our attention that Siegel's question was more general than we had originally thought.

\section{Comments on Theorem \ref{thimplication}}\label{sec:comments} 

\subsection{Siegel's formulation of his problem}\label{subsec:comments} 

In \cite[Chapter II, \S9]{siegellivre}, Siegel proved that the hypergeometric series of the type \eqref{eq:Efnhyp} with rational parameters are $E$-functions, and named them ``hypergeometric $E$-functions''. He then wrote on page 58: {\em Performing the substitution $x\mapsto \lambda x$ for arbitrary algebraic $\lambda$ and taking any polynomial in $x$ and finitely many hypergeometric $E$-functions, with algebraic coefficients, we get again an $E$-function satisfying a homogeneous linear differential equation whose coefficients are rational function of $x$. It would be interesting to find out whether all such $E$-functions can be constructed in the preceding manner.}

Siegel obviously considered $E$-functions in his sense, which we recall here: in Definition~\ref{defi1}, $(i)$ is unchanged but $(ii)$ and $(iii)$ have to be replaced by
\begin{enumerate}
\item[$(ii')$] For any $\varepsilon>0$ and for any $\sigma\in \textup{Gal}(\Qbar/\mathbb Q)$, there exists $N(\varepsilon, \sigma)\in \mathbb N$ such that for any $n\ge N(\varepsilon, \sigma)$, $\vert \sigma(a_n)\vert \leq n!^{\varepsilon}$.
\item[$(iii')$] There exists a sequence of integers $d_n\neq 0$ such that 
$d_na_m$ are algebraic integers for all~$m\le n$ and such that for any $\varepsilon>0$ there exists $N(\varepsilon)\in \mathbb N$ such that for any $n\ge N(\varepsilon)$, $\vert d_n\vert \le n!^{\varepsilon}$.
\end{enumerate}
Again, by $(i)$, there are only finitely many $\sigma$ to consider for a given $E$-function. We have chosen to formulate his problem for $E$-functions in the restricted sense of Definition~\ref{defi1} because the proof of Theorem~\ref{thimplication} is based on results which are currently proven only in this sense. However, {\em a fortiori}, Theorem~\ref{thimplication} obviously holds {\em verbatim} if one considers $E$-functions in Siegel's sense. Note also that the function $1-z$ is equal to the hypergeometric series ${}_1F_1[-1;1;z]$ so that Siegel could have formulated his problem in terms of hypergeometric series only, as we did.
Despite the apparences, the $E$-function $\sinh(z)=\frac{1}{2z}(e^{z}-e^{-z})$ is not a counter-example to Siegel's problem because $\frac{1}{2z}(e^z-1)={}_1F_1[1;2;z]$; there is no unicity of the representation of $E$-functions by polynomials in hypergeometric ones.

Furthermore, the series in \eqref{eq:Efnhyp} may be an $E$-function even if some of its parameters are not rational numbers. For instance, 
for every $\al\in \Qbar\setminus\mathbb{Z}_{\le 0}$, 
\begin{align*}
_{1}F_{1}\left[
\begin{matrix}
\alpha+1
\\
\al
\end{matrix} \,; z\right]&=\sum_{n=0}^\infty \frac{(\al+1)_n}{(1)_n(\al)_n} z^n = \sum_{n=0}^\infty \frac{\al+n}{\al} \cdot \frac{z^n}{n!} = \big(1+\frac{z}{\alpha}\big)e^z
\end{align*}
is an $E$-function. Thus, even though Siegel did not consider such examples, the notion of ``hypergeometric $E$-functions'' could be interpreted in a broader way than he did in his problem. Galochkin \cite{galochkin} proved the following non-trivial characterization, where $E$-functions are understood in Siegel's sense. (See \cite{rivoalgaloch} for a different proof restricted to hypergeometric $E$-functions of type ${}_{p}F_p$).
\begin{theo*}[Galochkin]\label{theo:galoch} Let $p, q\ge1$, $q\ge p$, $a_1, \ldots, a_{p}, b_1, \ldots, b_q \in (\mathbb C\setminus\mathbb Z_{\le 0})^{p+q}$ be such that $a_i\neq b_j$ for all $i,j$. Then, the hypergeometric series ${}_{p}F_q[a_1, \ldots, a_p; b_1,\ldots, b_q ;z^{q-p+1}]$ is an $E$-function if and only if the following two conditions hold:
\begin{enumerate}
\item[$(i)$] The $a_j$'s and $b_j$'s are all in $\Qbar$; 
\item[$(ii)$] The $a_j$'s and $b_j$'s which are not rational (if any) can be grouped in $k\le p$ pairs $(a_{j_1}, b_{j_1}), \ldots, (a_{j_k}, b_{j_k})$ such that $a_{j_\ell}-b_{j_\ell}\in \mathbb N$.
\end{enumerate} 
\end{theo*}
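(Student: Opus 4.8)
We argue the two implications separately. For sufficiency, assume $(i)$ and $(ii)$ hold, and let the $k$ pairs be $(a_{j_1},b_{j_1}),\dots,(a_{j_k},b_{j_k})$ with $a_{j_\ell}-b_{j_\ell}=m_\ell\in\N$. Since $b_{j_\ell}\notin\Z_{\le0}$, each quotient $(a_{j_\ell})_n/(b_{j_\ell})_n=(b_{j_\ell}+n)(b_{j_\ell}+n+1)\cdots(b_{j_\ell}+n+m_\ell-1)/(b_{j_\ell})_{m_\ell}$ is a polynomial in $n$ with coefficients in $\Qbar$. Hence, with $\theta:=z\,\dd/\dd z$, there is $R\in\Qbar[X]$ with ${}_pF_q[\dots;z^{q-p+1}]=R\big((q-p+1)^{-1}\theta\big)\big[\,{}_{p-k}F_{q-k}[\,\textup{remaining parameters}\,;z^{q-p+1}]\,\big]$, the inner series having only rational parameters (as $(ii)$ places every non-rational parameter in one of the pairs) and being an $E$-function by Siegel's theorem; when $p=k$ one first rewrites ${}_0F_r$ as ${}_1F_{r+1}[1;1,\dots;\cdot]$. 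As the $E$-functions form a ring containing $\Qbar[z]$ and stable under $\dd/\dd z$, they are stable under $R\big((q-p+1)^{-1}\theta\big)$, so the left-hand side is an $E$-function.

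For the converse, assume $F:={}_pF_q[\dots;z^{q-p+1}]$ is an $E$-function. Since $F\in\Qbar[[z]]$, all its Taylor coefficients are algebraic, hence so is every quotient of two consecutive nonzero ones, i.e. every value at $n=0,1,2,\dots$ of the rational function $x\mapsto\prod_i(x+a_i)\big/\big((x+1)\prod_j(x+b_j)\big)$. A rational function taking algebraic values at infinitely many rational points has algebraic coefficients; and since $a_i\ne b_j$, no $b_j$ is cancelled between numerator and denominator, so each $b_j$ (and likewise each $a_i$) is a root of an explicit polynomial with algebraic coefficients, hence algebraic. This proves $(i)$.

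It remains to deduce $(ii)$ from condition $(iii)$, and the plan is to reduce to a statement about ratios of Pochhammer symbols. The Gauss multiplication formula gives the identity $((q-p+1)n)!=(q-p+1)^{(q-p+1)n}\,n!\prod_{r=1}^{q-p}\big(r/(q-p+1)\big)_n$, so that $N!$ times the Taylor coefficient of $z^N$ in $F$ (with $N=(q-p+1)n$) equals $(q-p+1)^{(q-p+1)n}$ times $\prod_{l=1}^{q}(c_l)_n\big/\prod_{l=1}^{q}(d_l)_n$, where $\{c_l\}=\{a_1,\dots,a_p\}\cup\{\,r/(q-p+1):1\le r\le q-p\,\}$ and $\{d_l\}=\{b_1,\dots,b_q\}$ are two multisets of the same size $q$, all of whose members are algebraic and not in $\Z_{\le0}$. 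By condition $(iii)$, the least common multiple over $n\le M$ of the denominators of these quotients is $O(C^M)$. I would then establish the following Landau-type criterion: for such multisets of equal size, this holds if and only if, inside every \emph{non-rational} class modulo $\Z$, the members of $\{c_l\}$ in that class are as many as those of $\{d_l\}$ and majorize them (they can be matched so that each chosen $c_l$ is $\ge$ its partner $d_l$), no condition being imposed on the rational members. Granting this, and using that the auxiliary parameters $r/(q-p+1)$ are rational, one gets a bijection between the non-rational $b_j$'s and the non-rational $a_i$'s with each matched pair in one class modulo $\Z$ and $a_i\ge b_j$, hence $a_i-b_j\in\N$ since $a_i\ne b_j$. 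This is exactly $(ii)$, the number of pairs being the common number of non-rational $a_i$'s, at most $p$.

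The substantial point is the ``only if'' half of this criterion: an unmatched non-rational parameter must force super-geometric denominators. Concretely, if a non-rational $b_j$, with minimal polynomial $P$ over $\Q$ irreducible of degree $\ge2$, has no partner $a_i$ with $a_i-b_j\in\Z_{\ge0}$, then the relevant part of the $n$-th coefficient behaves like the reciprocal of $(b_j+n)(b_j+n+1)\cdots(b_j+n+m-1)$, and one must bound, over $n\le M$, the least common multiple of the denominators of these numbers; that lcm grows like $e^{cM\log M}$ rather than $e^{O(M)}$, because a positive proportion of the values $P(n)$, $n\le M$, have a prime factor exceeding $M$, these primes are for the most part pairwise distinct, and already their product (which divides the lcm) has size $e^{cM\log M}$. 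Making this uniform, and ruling out that accidental congruences among the finitely many parameters cancel these primes, requires a careful prime-by-prime analysis separating the ``small'' primes (absorbed by the factorial, exactly as in Siegel's integrality proof for rational parameters, via the prime number theorem) from the ``large'' ones; this is where I expect the real work to lie. An alternative would avoid the arithmetic and use André's theory of $E$-operators — the minimal operator of an $E$-function has only rational exponents at $0$, and after discarding the reducibility factors of the hypergeometric operator coming from integer differences $a_i-b_j$ the surviving irreducible factor forces the remaining $b_j$ to be rational — but this needs the reducibility analysis of hypergeometric operators.
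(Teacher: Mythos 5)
The paper does not actually prove this statement: it is imported verbatim from Galochkin's 1981 paper (with a pointer to \cite{rivoalgaloch} for the case $p=q$), so there is no internal proof to measure your attempt against; it has to stand on its own. Much of it does. The sufficiency direction is complete: $(a)_n/(b)_n=(b+n)_{m}/(b)_{m}$ for $a-b=m\in\N$ is a polynomial in $n$ with algebraic coefficients, and $E$-functions are stable under $R\big((q-p+1)^{-1}\theta\big)$, so the series reduces to a hypergeometric $E$-function with rational parameters, which is an $E$-function by Siegel. The algebraicity of the parameters via interpolation of the coefficient ratios is correct (the only cancellation in $\prod_i(x+a_i)/\big((x+1)\prod_j(x+b_j)\big)$ is a possible factor $x+1$, which is harmless), and the Gauss multiplication step correctly converts condition $(iii)$ into a statement about the denominators of a balanced ratio of $q$ Pochhammer symbols against $q$ Pochhammer symbols. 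The combinatorial criterion you state (equal counts and sorted domination inside each irrational class mod $\Z$, no constraint on the rational parameters) is indeed the right one.

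The genuine gap is the one you flag yourself: the ``only if'' half of that criterion is the entire substance of Galochkin's theorem, and it is not proved here. Two estimates are missing. First, that a \emph{balanced} ratio of rational Pochhammer symbols has denominators of size $e^{O(n)}$ (Siegel's prime-counting computation); you need this not only for the ``if'' direction but also to isolate the irrational leftovers in the ``only if'' direction. Second, and crucially, that an unmatched irrational parameter — a surviving factor $1/(\beta+n)$ with $\beta$ irrational algebraic, or a surviving $(\alpha)_n/(r)_n$ with $\alpha$ irrational and $r$ rational — forces $\operatorname{lcm}_{n\le M}$ of the denominators to be of size $e^{cM\log M}$. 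Your heuristic (many values of the norm form have a prime factor exceeding $M$, and these primes are essentially distinct) is the right idea, but making it a lower bound requires the prime ideal theorem/Chebotarev to show that, for most primes $\ell\in(M,M^{\deg})$ dividing the norm denominator, $\ell$ does \emph{not} divide the algebraic-integer numerator; this is delicate precisely because such compensation between different classes mod $\Z$ does occur for rational parameters (e.g.\ $(1/3)_n/(1/5)_n$ has geometric denominators) and must be ruled out for irrational ones. Until that lemma is established, the necessity of $(ii)$ — the non-trivial part of the theorem — remains unproven; what you have is an accurate roadmap rather than a proof.
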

It follows that hypergeometric $E$-functions ${}_{p}F_q(z^{q-p+1})$ with arbitrary parameters are in fact $\Qbar$-linear combinations of hypergeometric $E$-functions ${}_{p'}F_{q'}(z^{q'-p'+1})$ (with various values of $p'$ and $q'$) with rational parameters.  Hence, there is no loss of generality in considering the latter instead of the former in Siegel's problem.

\subsection{The ring $\h$} \label{subsecH}
For $x\in \mathbb C\setminus\mathbb Z_{\le 0}$, we define the Digamma function 
$$
\Psi(x):=\frac{\Gamma'(x)}{\Gamma(x)}=-\gamma+\sum_{n=0}^\infty \Big(\frac{1}{n+1}-\frac{1}{n+x}\Big),
$$ 
where $\gamma$ is Euler's constant $\lim_{n\to +\infty} (\sum_{k=1}^n 1/k-\log(n))$, and the Hurwitz zeta function 
$$
\zeta(s,x):=\frac{(-1)^{s}}{(s-1)!}\Psi^{(s-1)}(x)=\sum_{n=0}^\infty \frac{1}{(n+x)^s}, \quad s\in  \mathbb N, \ s \geq 2.
$$ 
The polylogarithms are defined by
$$
\li_s(z) := \sum_{n=1}^\infty \frac{z^n}{n^s}, \quad s\in \Netoile = \mathbb N \setminus \{0\},
$$
where the series converges for $\vert z\vert \le 1$ (except at $z=1$ if $s=1$). The Beta function is defined as 
$$
\textup{B}(x,y):=\frac{\Gamma(x)\Gamma(y)}{\Gamma(x+y)}
$$
for $x,y\in \mathbb C$ which are not singularities of Beta coming from the poles of $\Gamma$ at non-positive integers.

In this section, we shall prove the following result.
\begin{prop} \label{prop:1} The ring $\h$ is generated by $\Qbar$, $\gamma$, $1/\pi$, $\li_s(e^{2i \pi r})$ $(s\in  \Netoile$, $r\in \mathbb Q$, $(s, e^{2i \pi r})\neq (1,1))$, $\log(q)$ $(q\in  \Netoile)$ and $\Gamma(r)$ $(r\in \mathbb Q\setminus\mathbb Z_{\le 0})$.

For any $r\in \mathbb Q\setminus\mathbb Z_{\le 0}$, $\Gamma(r)$ is a unit of $\h$.
\end{prop}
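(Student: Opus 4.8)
The plan is to establish the equality of the two descriptions of $\h$ by a double inclusion, using the classical theory of the Gamma function together with standard special-function identities. Recall $\h$ is generated by $\Qbar$, $1/\pi$ and all $\Gamma^{(n)}(r)$ with $n\ge 0$, $r\in\Q\setminus\Z_{\le 0}$; call $\h'$ the ring generated by $\Qbar$, $\gamma$, $1/\pi$, the values $\li_s(e^{2i\pi r})$, the $\log q$ for $q\in\Netoile$, and the $\Gamma(r)$.

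First I would show $\h'\subset\h$. For this one needs each generator of $\h'$ inside $\h$. The values $\Gamma(r)$ are generators of $\h$ by definition. For $\gamma$, use $\gamma=-\Psi(1)=-\Gamma'(1)$ (since $\Gamma(1)=1$), which lies in $\h$. For $\log q$ with $q$ a positive integer, I would use the fact that $\Psi(x)+\gamma=\sum_{k=1}^{q}\frac1q\big(\Psi(\tfrac{x+k-1}{q})+\gamma\big)+\log q$ coming from the Gauss multiplication formula $\Gamma(qx)=(2\pi)^{(1-q)/2}q^{qx-1/2}\prod_{k=0}^{q-1}\Gamma(x+k/q)$, differentiated logarithmically and specialised at a rational point; alternatively one reads $\log q$ off from $\Psi(1/q)=-\gamma-\log q-\tfrac\pi2\cot(\pi/q)+\dots$ (the Gauss digamma theorem), noting $\cot(\pi/q)\in\Qbar$. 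Since $\Psi(r)=\Gamma'(r)/\Gamma(r)$ and $\Gamma(r)$ is (as we will check) invertible in $\h$, all $\Psi(r)\in\h$, hence $\log q\in\h$. For the polylogarithm values, I would invoke the Hurwitz formula expressing $\li_s(e^{2i\pi r})$ for rational $r$ as an explicit $\Qbar$-linear combination of the Hurwitz zeta values $\zeta(s,a/b)$, i.e. of $\Psi^{(s-1)}(a/b)$, divided by a power of $\pi$ times a factorial; since $\Psi^{(s-1)}(a/b)=\Gamma^{(s)}(a/b)/\Gamma(a/b)-(\text{lower }\Psi\text{-terms})$ by the Leibniz rule applied to $\Gamma'=\Psi\,\Gamma$, an induction on $s$ puts all the $\Psi^{(s-1)}(r)$, hence all $\li_s(e^{2i\pi r})$, into $\h$ (using again invertibility of $\Gamma(r)$ and $1/\pi\in\h$).

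Next I would prove $\Gamma(r)$ is a unit of $\h$, which the first part already needs and which is the linchpin of the whole argument. If $r=p/q$ with $q\ge 1$, I would use the reflection/multiplication machinery: $\Gamma(r)\Gamma(1-r)=\pi/\sin(\pi r)$, and $\sin(\pi p/q)\in\Qbar$, so $\Gamma(r)\Gamma(1-r)=\pi\cdot(\text{nonzero algebraic})$, whence $\Gamma(r)^{-1}=\Gamma(1-r)\cdot\tfrac1\pi\cdot(\text{algebraic})\in\h$ provided $\Gamma(1-r)\in\h$, which holds since $1-r\in\Q\setminus\Z_{\le 0}$ when $0<r<1$; for general $r$ one reduces to $0<r<1$ by the functional equation $\Gamma(r+1)=r\Gamma(r)$, multiplying or dividing by algebraic numbers, which stays inside $\h$. (One small point: the possible $r\in\Z_{>0}$ gives $\Gamma(r)$ a positive integer, trivially a unit.) Then $1/\pi\in\h$ and the $\Qbar$ coefficients give $\Gamma(r)^{-1}\in\h$, as claimed.

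Finally I would prove $\h\subset\h'$, i.e. every $\Gamma^{(n)}(r)$ lies in $\h'$. Since $\Gamma'=\Psi\,\Gamma$, the Leibniz rule gives $\Gamma^{(n)}(r)$ as a universal polynomial with integer coefficients in $\Gamma(r)$ and the derivatives $\Psi(r),\Psi'(r),\dots,\Psi^{(n-1)}(r)$; as $\Gamma(r)\in\h'$ by definition, it suffices to show each $\Psi^{(m)}(r)\in\h'$. For $m=0$ use the Gauss digamma theorem: $\Psi(p/q)=-\gamma-\log(2q)-\tfrac\pi2\cot(\pi p/q)+2\sum_{k=1}^{\lfloor(q-1)/2\rfloor}\cos(2\pi kp/q)\log\!\big(\sin(k\pi/q)\big)$, so $\Psi(r)\in\h'$ once we know $\log(\sin(k\pi/q))\in\h'$; but $\sin(k\pi/q)=\tfrac{1}{2i}(e^{i k\pi/q}-e^{-ik\pi/q})$ is algebraic, and $\log$ of a nonzero algebraic number on the unit circle is, up to $i\pi$ and a rational multiple of $2i\pi$, a $\Q$-linear combination of values $\li_1(e^{2i\pi r'})=-\log(1-e^{2i\pi r'})$ — more efficiently, $\Psi(r)-\Psi(r')$ for $r,r'$ with the same denominator is a $\Q$-linear combination of such $\log(1-e^{2i\pi\cdot})$ via the finite-difference formula $\Psi(x)=-\gamma+\sum_{j=0}^{N-1}\big(\tfrac1{j+1}-\tfrac1{j+x}\big)$ summed in arithmetic progressions — so all $\Psi(r)\in\h'$. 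For $m\ge 1$, differentiate the Hurwitz-zeta relation $\zeta(s,x)=\tfrac{(-1)^s}{(s-1)!}\Psi^{(s-1)}(x)$ the other way: by the Hurwitz formula, $\zeta(m+1,r)=\tfrac{(m)!\,(2\pi)^{m+1}}{?}$-type combination of $\li_{m+1}(e^{2i\pi a/q})$; concretely, for $s\ge 2$ and rational $r$, $\zeta(s,r)$ is a $\Qbar$-linear combination of $\pi^{s}\cdot(\text{algebraic})\cdot\li_{s}(e^{2i\pi a/q})$, all of which are in $\h'$, whence $\Psi^{(s-1)}(r)=(-1)^s(s-1)!\,\zeta(s,r)\in\h'$. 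Combining, $\Gamma^{(n)}(r)\in\h'$ for all $n,r$, giving $\h\subset\h'$ and completing the proof.

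The main obstacle I anticipate is bookkeeping the exact constants in the Hurwitz and Gauss digamma formulas — in particular making sure that the extraneous factors that appear (powers of $\pi$, factorials, the algebraic numbers $\cot(\pi p/q)$, $\cos(2\pi kp/q)$, $\sin(k\pi/q)$) are all manifestly in $\h$ resp.\ $\h'$, and handling the degenerate exclusions $(s,e^{2i\pi r})\neq(1,1)$ (i.e.\ the divergence of $\li_1$ at $1$) so that only convergent polylogarithm values are ever invoked. These are routine but must be done carefully; the unit claim for $\Gamma(r)$ is the one genuinely load-bearing step and rests entirely on the reflection formula.
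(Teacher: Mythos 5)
Your proposal is correct and follows essentially the same route as the paper: the reflection formula for the unit claim, the Leibniz expansion of $\Gamma'=\Psi\,\Gamma$ to pass between $\Gamma^{(n)}(r)$ and $\Psi^{(m)}(r)$, and the Gauss digamma/finite Fourier identities linking $\Psi^{(s-1)}$ at rationals to $\gamma$, $\log q$ and polylogarithms at roots of unity. The only cosmetic difference is that the paper uses the clean finite Fourier inversion $\zeta(s,p/q)=q^{s-1}\sum_n \mu^{-np}\li_s(\mu^n)$ (no powers of $\pi$), whereas your invocation of the Hurwitz functional equation drags in powers of $\pi$ and factorials --- harmless, since $\pi=\Gamma(1/2)^2$ and $1/\pi$ are both available, but worth tidying.
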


\begin{proof}
We first prove that for any $r\in \mathbb Q\setminus\mathbb Z_{\le 0}$, $\Gamma(r)$ is a unit of $\h$. Indeed, if $r\in  \Netoile$, then $\Gamma(r)\in  \Netoile$ and $1/\Gamma(r)\in \mathbb Q \subset \h.$ If $r\in \mathbb Q\setminus\mathbb Z$, then by the reflection formula \cite[p.~9, Theorem 1.2.1]{andrews}, we have 
$$
\frac{1}{\Gamma(r)} = \frac1{\pi}\sin(\pi r) \Gamma(1-r) \in \h
$$
because $1/\pi \in \h$, $\sin(\pi r)\in \Qbar \subset \h$ and $\Gamma(1-r)\in \h$. 

\medskip

From the identity $\Gamma'(x)= \Gamma(x)\Psi(x)$  we obtain that, for any integer $s\ge 1$ and any $r\in \mathbb Q\setminus\mathbb Z_{\le 0}$, 
$$
\Psi^{(s)}(r)=\frac{\Gamma^{(s+1)}(r)}{\Gamma(r)}-\sum_{k=0}^{s-1} {s \choose k} \frac{\Gamma^{(s-k)}(r)}{\Gamma(r)}\Psi^{(k)}(r).
$$
Since $\Gamma(r)$ is a unit of $\h$, we have $\psi(r)\in \h$ and it follows immediately by induction on $s $   that $\zeta(s,r)=\frac{(-1)^{s}}{(s-1)!}\Psi^{(s-1)}(r)\in \h$   for any $s\ge 2$ and any $r\in \mathbb Q\setminus\mathbb Z_{\le 0}$. In particular $\gamma=-\Psi(1)$ and the values of the Riemann zeta function $\zeta(s)=\zeta(s,1)$ $(s\ge 2)$ are all in~$\h$. Note that $\gamma$ is not expected to be in ${\bf G}$ but that $\zeta(s)\in {\bf G}$ for all $s\ge 2$.

We have for any $x\in \mathbb C\setminus\mathbb Z_{\le 0}$ and any $n\in \mathbb N$, 
\begin{equation}\label{gauss5}
\Psi(x+n)=\Psi(x)+\sum_{k=0}^{n-1} \frac1{k+x}, 
\end{equation}
and the identity $\Gamma'(x)= \Gamma(x)\Psi(x)$ also implies by induction that, for any $x\in \mathbb C\setminus\mathbb Z_{\le 0}$, we have 
\begin{equation}\label{gauss0}
\Gamma^{(s)}(x)= \Gamma(x) P_s\big(\Psi(x), \zeta(2,x), \ldots, \zeta(s,x)\big)
\end{equation}
for some $P_s\in \mathbb Q[X_1, \ldots, X_s]$. Furthermore, set $p, q \in \mathbb N$, $0<p\le q$, and $\mu:=\exp(2i\pi/q)$. Then, 
\begin{align}
\Psi\Big(\frac{p}{q}\Big)&=-\gamma-\log(q)-\sum_{n=1}^{q-1} \mu^{-np}\,\li_1(\mu^n),\label{gauss1}
\\
\li_1(\mu^p)&= -\frac{1}{q} \sum_{n=1}^q \mu^{np}\, \Psi\Big(\frac{n}{q}\Big),\quad p\neq q\label{gauss2}
\\
\zeta\Big(s,\frac{p}{q}\Big) &= q^{s-1}\sum_{n=1}^{q} \mu^{-np}\, \li_s(\mu^n), \quad s\ge 2\label{gauss3}
\\
\li_s(\mu^p) &= \frac{1}{q^s} \sum_{n=1}^q \mu^{np} \,\zeta\Big(s,\frac{n}{q}\Big),\quad s\ge 2.\label{gauss4}
\end{align}
We refer to  \cite[p.~14]{andrews} for details. 
From \eqref{gauss2} and \eqref{gauss4}, we deduce that $\li_s(\mu^p) \in \h$ for any $s\geq 1$ (with $(s,\mu^p)\neq(1,1)$); then  \eqref{gauss1} implies in turn that $\log(q)\in \h$. The numbers $\log(q)$ and $\li_s(\mu^p)$ are also in ${\bf G}$.

The set of Identities \eqref{gauss5}--\eqref{gauss4} shows that $\h$ coincides with the ring generated by $\Qbar$, $\gamma=-\Psi(1)$, $1/\pi$, $\li_s(e^{2i \pi r})$ ($s\in  \Netoile$, $r\in \mathbb Q$, $(s, e^{2i \pi r})\neq (1,1)$), $\log(q)$ $(q\in  \Netoile)$ and $\Gamma(r)$ $(r\in \mathbb Q\setminus\mathbb Z_{\le 0})$.
\end{proof}

Other units of $\h$ can be easily identified, which are  also units of ${\bf G}$ (see \cite[\S2.2]{gvalues}): non-zero algebraic numbers and the values of the Beta function $\textup{B}(x,y)$ at rational numbers $x,y$ at which it is defined and non-zero. It follows that $\pi=\Gamma(1/2)^2=\textup{B}(1/2, 1/2)$ and more generally 
$\Gamma(a/b)^b=(a-1)!\prod_{j=1}^{b-1} \textup{B}(a/b,ja/b)$, $a,b\in \Netoile$, are units of $\h$. By the Chowla-Selberg formula \cite[p.~230, Corollary 2]{moreno}, periods of CM elliptic curves defined over $\mathbb Q$ are also units of $\h$.

If Siegel's problem has a positive answer, Theorem  \ref{thimplication} yields ${\bf G} \subset \h$: any element of $ {\bf G}$ can be written as a polynomial, with algebraic coefficients, in the numbers $\gamma $, $1/\pi$, $\li_s(e^{2i \pi r})$, $\log(q)$  and $\Gamma(r)$ of Proposition \ref{prop:1}. This seems extremely doubtful: we recall that $ {\bf G}$ contains   all the multiple zeta values 
$$
\zeta(s_1, s_2, \ldots, s_n):=\sum_{k_1>k_2>\cdots>k_n\ge 1}\frac{1}{k_1^{s_1}k_2^{s_2}\cdots k_n^{s_n}},
$$
where the integers $s_j$ are such that $s_1\ge2, s_2\ge 1, \ldots, s_n\ge 1$, all values at algebraic points of (multiple) polylogarithms, all elliptic and abelian integrals, etc. For now, we have proved that ${\bf G} \cap \h$ contains the ring generated by $\Qbar$, $1/\pi$ and all the values $\li_s(e^{2i \pi r})$, $\log(q)$  and $\textup{B}(x,y)$, and it is in fact possible that both rings are equal. 

\medskip

It is interesting to know what can be deduced from the standard conjectures in the domain,  such as the Bombieri-Dwork conjecture ``$G$-functions come from geometry'', Grothen\-dieck's periods conjecture, its extension to exponential periods by Fres\'an-Jossen, and the Rohrlich-Lang conjecture on the values of the Gamma function; see  \cite[Partie~III]{andremotifs} and \cite[p.~201, Conjecture~8.2.5]{fresan}. In a private communication to the authors, Y. Andr\'e wrote the following argument, which he has autorised us to reproduce here. It shows that ${\bf G} \subset \h$ cannot hold under these standard conjectures:

\smallskip

{\em Because of the presence of $\gamma$, the inclusion ${\bf G} \subset \h$ does not contradict Grothendieck's periods conjecture but it certainly contradicts its extension to exponential motives.  More precisely, in the description of $\h$ given in Proposition \ref{prop:1}, we find $\gamma$ (a period of an exponential motive $E_\gamma$, which is a  non-classical extension of the Tate motive \cite[\S12.8]{fresan}), $1/\pi$ (a period of the Tate motive), $\li_s(e^{2i\pi r})$ (periods of a mixed Tate motive over $\mathbb Z[1/r]$), $\log(q)$ (a period of a $1$-motive over $\mathbb Q$), and $\Gamma(r)$ whose suitable powers are periods of Abelian varieties with complex multiplication by $\mathbb Q(e^{2i\pi r})$. On the one hand, let $M$ be the Tannakian category of mixed motives over  $\Qbar$ generated by all these motives.  
On the other hand, consider a non CM elliptic curve over $\Qbar$ and $E$ its motive. The periods of $E$ are in ${\bf G}$: indeed, it is enough to consider the Gauss hypergeometric solutions centered at $1/2$, and to observe that the periods of the fiber at $1/2$ of the Legendre family can be expressed using values of the Beta function at rational points by the Chowla-Selberg formula, and in particular are algebraic in $\pi$ and $\Gamma(1/4)$. If ${\bf G} \subset \h$, the periods of $E$ are in $\h$. By the exponential periods conjecture, $E$ would be in $M$, which is impossible since  the motivic Galois group of $M$ is pro-resoluble, while that of $E$ is $GL_2$.}

\medskip

We conclude this section with a question of J. Fres\'an: at which differential order can we expect to find a counter-example to Siegel's problem? Based on the above remarks, it seems unlikely  that all the values $\li_s(\alpha)$ are in $\h$, where the integer $s\ge 1$ and $\alpha \in\Qbar$, $\vert \alpha\vert <1$. From the proof of Theorem \ref{thdevG} below,  we deduce that if $\li_s(\alpha)\notin \h$, then the $E$-function 
$$
\sum_{n=2}^\infty \bigg(\sum_{k=1}^{n-1}\frac{\alpha^k}{k^s}\bigg) \frac{z^n}{n!}
$$
is such a counter-example. It is of differential order at most $s+2$ because it is in the kernel of the differential operator 
$
P(\theta-2)+zQ(\theta-1)+z^2R(\theta)
\in \Qbar[z,\frac{d}{dz}]$, where $\theta:=z\frac{d}{dz}$ and 
$$
P(x):=(x+2)(x+1)^{s+1}, \quad Q(x):=(x+1)(\alpha x^s-(x+1)^s), \quad R(x):=\alpha x^s.
$$
It is thus possible that a counter-example to Siegel's problem already exists at the order~3. However, the function $H(z):=\sum_{n=0}^\infty (\sum_{k=1}^{n}\frac{1}{k}) \frac{z^n}{n!}$ is an example of order 3  to the problem (see the Introduction) and this shows that one must be careful and not draw hasty conclusions here.

\section{Elements of $\GG$ as coefficients of asymptotic expansions of $E$-functions} \label{sec:Gcoeffs}

\subsection{Definition of asymptotic expansions} \label{subsecasyexp}

As in \cite{ateo}, the asymptotic expansions used throughout this paper are defined as follows.

\begin{defi} \label{defiasy}
Let $\theta\in\R$, and $\Sigma\subset\C$, $S\subset\mathbb C$, $T\subset\N$ be finite subsets. Given
complex numbers $ c_{\rho, \al,i,n}$, we write
\begin{equation} \label{eqasy1}
f(x) \devasy \sum_{\rho\in\Sigma} e^{\rho x} 
 \sum_{\al\in S } \sum_{i\in T } \sum_{n=0}^{\infty} c_{\rho, \al,i,n}x^{-n-\al}\log(1/x)^i
 \end{equation}
and say that the right-hand side is the asymptotic expansion of $f(x)$ in a large sector bisected by 
the direction $\theta$, if there exist $\eps, R, B, C > 0$ and, for any $\rho\in\Sigma$, a function $f_\rho(x)$ holomorphic in
$$U = \Big\{x\in\C, \,\,|x|\geq R, \, \, \theta-\frac{\pi}2-\eps \leq \arg(x) \leq \theta+\frac{\pi}2+\eps \Big\},
$$
such that 
$$f(x) = \sum_{\rho\in\Sigma} e^{\rho x} f_\rho(x)
 $$
 and
$$
\Big| f_\rho(x) - \sum_{\al\in S } \sum_{i\in T } \sum_{n=0}^{N-1} c_{\rho, \al,i,n}x^{-n-\al}\log(1/x)^i\Big| \leq C^N N! |x|^{B-N} 
$$
for any $x\in U$ and any $N\geq 1$.
\end{defi}

This means (see \cite[\S\S 2.1 and 2.3]{Ramis}) that for any $\rho\in\Sigma$, 
\begin{equation} \label{eqasy2}
 \sum_{\al\in S } \sum_{i\in T } \sum_{n=0}^{N-1} c_{\rho, \al,i,n}x^{-n-\al}\log(1/x)^i
 \end{equation}
 is 1-summable in the direction $\theta$ and its sum is $f_\rho(x)$. Using a result of 
 Watson (see \cite[\S 2.3]{Ramis}), the sum $f_\rho(x)$ is then determined by its asymptotic expansion \eqref{eqasy2}. 
Therefore the expansion on the right-hand side of \eqref{eqasy1} determines $f(x)$, 
up to analytic continuation. The converse holds too: \cite[Lemma 1]{ateo} asserts that a 
 given function $f(x)$ can have at most one asymptotic expansion in the sense of Definition \ref{defiasy}. Of course we assume implicitly (throughout this paper) that $\Sigma$, $S$ and $T$ in \eqref{eqasy1} cannot trivially be made smaller, and that for any $\alpha$ there exist $\rho$ and $i$ with $c_{\rho, \al,i,0}\neq 0$.

\subsection{Computing asymptotic expansions of $E$-functions}\label{subsecJEP}

In this section, we state \cite[Theorem 5]{ateo} which enables one to determine the asymptotic expansion of an $E$-function. We refer to \cite{ateo} for more details.

Let $E(x)=\sum_{n=1}^\infty a_n x^n$ be an $E$-function such that $E(0)=0$; consider $g(z)=\sum_{n=1}^\infty \frac{a_n}{z^{n+1}}$. Denoting by $\calFbar : \C[z,\frac{\dd}{\dd z}] \to \C[x,\frac{\dd}{\dd x}]$ the Fourier transform 
of differential operators, i.e. the morphism of $\C$-algebras defined by 
$\calFbar(z) = \frac{\dd}{\dd x}$ and $\calFbar(\frac{\dd}{\dd z}) = -x$, there exists a $G$-operator $\calD$ such that $\calFbar \calD E = 0$, and we have $(\frac{\dd}{\dd z })^\delta \calD g = 0$ where 
$\delta$ is the degree of $\calD$. We denote by 
$\nvsing$ the set of all finite singularities of $ \calD$ and let 
\begin{equation}\label{eq:horsspec}
\horsspec = \R\setminus\{\arg(\rho-\rho'), \rho,\rho'\in\nvsing, \rho\neq\rho'\}
\end{equation}
where all the values modulo $2\pi$ of the argument of $\rho-\rho'$ are considered, so that $\horsspec+\pi = \horsspec$.

We fix $\theta\in\R$ with $- \theta\in \horsspec$ (so that the direction $\theta$ is not anti-Stokes, i.e. not singular, see for instance \cite[p. 79]{Loday}). For any $\rho\in\nvsing$ we denote by $\deltarho = \rho - e^{-i\theta}\R_+$ the closed half-line of 
angle $-\theta+\pi \bmod 2\pi$ starting at $\rho$. Since
$-\theta\in\horsspec$, no singularity $\rho'\neq\rho$ of $ \calD$ lies on $\deltarho $: these 
half-lines are pairwise disjoint. We shall work in the simply connected cut plane obtained from $\C$ 
by removing the union of these half-lines. We agree that for $\rho \in \nvsing$ and $z$ in the 
cut plane, $\arg(z-\rho)$ will be chosen in the open interval $(-\theta-\pi,-\theta+\pi)$. This enables 
one to define $\log(z-\rho)$ and $(z-\rho)^\al$ for any $\al \in \Q$.

Now let us fix $\rho\in\nvsing$. Combining theorems of Andr\'e, Chudnovski and Katz (see \cite[\S3]{YA1} or \cite[\S4.1]{gvalues}), 
there exist (non necessarily distinct) rational numbers $t_1^\rho, \ldots, t_{J(\rho)}^\rho$, with $J(\rho)\geq 1$, 
and $G$-functions $g_{j,k}^\rho$, for $1\leq j \leq J(\rho) $ and $0\leq k \leq K(\rho,j)$, such that a basis of local solutions of $(\frac{\dd}{\dd z })^\delta \calD$ around $\rho$ (in the above-mentioned cut plane) 
is given by the functions
 \begin{equation}\label{eqdeffjk}
f_{j,k}^\rho(z-\rho) = (z-\rho)^{t_j^\rho} \sum_{k'=0}^k g_{j,k-k'}^\rho(z-\rho) \frac{\log(z-\rho)^{k'}}{k'!}
\end{equation}
for $1\leq j \leq J(\rho) $ and $0\leq k \leq K(\rho,j)$. Since $(\frac{\dd}{\dd z })^\delta \calD g= 0$ 
we can expand $g$ in this basis:
\begin{equation}\label{gconnectbaseack}
g(z) = \sum_{j=1}^{J(\rho)}\sum_{k=0}^{K(\rho,j)}\varpi_{j,k}^\rho f_{j,k}^\rho(z-\rho)
\end{equation}
with connection constants $\varpi_{j,k}^\rho$; Theorem 2 of \cite{gvalues} yields $\varpi_{j,k}^\rho\in\GG$.

We denote by $\{u\} \in[0,1)$ the fractional part of a real number $u$, and agree that all 
derivatives of this or related functions taken at integers will be right-derivatives. We let
\begin{equation} \label{eqgg2}
y_{\al,i}(z) = \sum_{n=0}^\infty \frac{1}{i!} \frac{\dd^{i}}{\dd t^{i}}\Big(\frac{\Gamma(1-\{t\})}
{\Gamma(-t-n)}\Big)_{| t=\al } z^n \in\Q[[z]]
\end{equation}
for $\al\in\Q$ and $i\in\N$. We also denote
 by $\hada$ the Hadamard (coefficientwise) product of formal power series in $z$, and we consider 
\begin{equation} \label{eqgg1}
\eta_{j,k}^\rho(1/x) = \sum_{m=0}^k (y_{t_j^\rho,m}\hada g_{j,k-m}^\rho)(1/x) \in \Qbar[[1/x]]
\end{equation}
for any $1\leq j \leq J(\rho)$ and $0\leq k \leq K(j,\rho)$. Then \cite[Theorem 5]{ateo} is the following result, where $\Gh:=1/\Gamma$.

\begin{theo}\label{thjep} 
In a large sector bisected by the direction $\theta$ 
we have the following asymptotic expansion:
\begin{multline} \label{eqthjep}
E(x) \devasy \\ \sum_{\rho\in\Sigma} e^{\rho x} \sum_{j=1}^{J(\rho)} \sum_{k=0}^{K(j,\rho)} \varpi_{j,k}^\rho x^{-t_j^\rho -1}
 \sum_{i = 0}^{ k} \Big( \sum_{\ell = 0} ^{k -i} \frac{(-1)^{\ell}}{\ell!} 
\Gh^{(\ell)}(1-\{t_j^\rho \}) \eta_{j, k-\ell-i}^\rho (1/x) \Big) \frac{ \log(1/x)^{i} }{i!}.
\end{multline}
\end{theo}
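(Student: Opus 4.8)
The plan is to prove this by Laplace duality between $E$ and $g$, along the lines of the proof of \cite[Theorem 5]{ateo}. Since $E$ is an $E$-function vanishing at the origin, it is entire of exponential type and one has a Laplace-type integral representation $E(x) = \frac{1}{2i\pi}\int_\Gamma g(z)\,e^{xz}\,\dd z$ over a contour $\Gamma$ surrounding $\nvsing$, where $g$ — being, up to the change of variable $z\mapsto 1/z$, essentially a $G$-function — is holomorphic near $\infty$ and extends to a multivalued holomorphic function on $\C\setminus\nvsing$ with at most polynomial growth near each point of $\nvsing$; this last fact, together with the local expansion \eqref{eqdeffjk}, is exactly the content of the structure of $\calD$ at its singularities (regular singularities, rational local exponents, $G$-function coefficients) provided by the theorems of Andr\'e, Chudnovsky and Katz, which I take as given. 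The first step is then to deform $\Gamma$ so that it wraps tightly around the half-lines $\deltarho = \rho - e^{-i\theta}\R_+$ for $\rho\in\nvsing$; this is licit because the hypothesis $-\theta\in\horsspec$ makes these half-lines pairwise disjoint and free of further singularities, one may place the branch cut of $g$ at each $\rho$ along $\deltarho$, and $e^{xz}$ decays along $\deltarho$ when $\arg(x)$ is close to $\theta$, so that the part of the deformed contour going off to infinity contributes nothing. After the substitution $z = \rho + w$ one gets $E(x) = \sum_{\rho\in\nvsing} e^{\rho x}\cdot\frac{1}{2i\pi}\int_{\gamma_0} g(\rho+w)\,e^{xw}\,\dd w$, and inserting the local expansion \eqref{gconnectbaseack}--\eqref{eqdeffjk} together with $\varpi_{j,k}^\rho\in\GG$ reduces everything to the elementary Hankel integrals $\frac{1}{2i\pi}\int_{\gamma_0} w^{\,t+n}\log(1/w)^{i}\,e^{xw}\,\dd w$ with $t\in\Q$ and $i,n\in\N$.

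The second step is to evaluate these elementary integrals and assemble the answer. Starting from Hankel's formula $\frac{1}{2i\pi}\int_{\gamma_0} w^{-s}e^{xw}\,\dd w = x^{s-1}/\Gamma(s)$ and differentiating $i$ times with respect to the parameter $s$ — which brings down the powers of $\log(1/w)$ on the left and, by Leibniz, powers of $\log(1/x)$ times the derivatives $\Gh^{(\ell)}$ of $1/\Gamma$ on the right — one obtains the integral as $x^{-t-1}$ times a finite combination of $x^{-n}$, $\log(1/x)^{i'}$ and $\Gh^{(\ell)}(-t-n)$. Writing $-t-n = 1-\{t\}-(n+1+\lfloor t\rfloor)$ and using the functional equation $\Gamma(u+1)=u\,\Gamma(u)$ to move the integer part out, each $\Gh^{(\ell)}(-t-n)$ becomes a rational combination of the $\Gh^{(\ell')}(1-\{t\})$; the normalising factor $\Gamma(1-\{t\})$ is precisely what makes the coefficients of the series $y_{\al,i}$ of \eqref{eqgg2} lie in $\Q$, and the factor $\Gh^{(\ell)}(1-\{t_j^\rho\})$ left over is what appears in \eqref{eqthjep}. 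Feeding these evaluations back into the sum over $j$ and $k$, recognising the Hadamard product with $y_{t_j^\rho,m}$ as exactly the coefficientwise action of the $y$'s on the $G$-function coefficients of $g_{j,\cdot}^\rho$ (so that the $\eta_{j,k}^\rho$ of \eqref{eqgg1} appear), and disentangling the nested summations over the remaining indices by Vandermonde/Leibniz identities, one reaches the right-hand side of \eqref{eqthjep}. This part of the argument is laborious but routine.

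The main difficulty, in my view, is to check that what comes out is genuinely an asymptotic expansion in the sense of Definition \ref{defiasy}, i.e. that the Gevrey-$1$ bound $C^N N!\,|x|^{B-N}$ holds uniformly on the sector $U$. I would handle this by splitting each loop $\gamma_0$ into a small arc around $\rho$ — where the local series \eqref{eqdeffjk} converges and can be truncated at order $N$, the geometric growth of the coefficients of the $G$-functions $g_{j,\cdot}^\rho$ producing the factor $C^N$ and the identity $\int_0^{\infty} s^{\Re t + N}e^{-c|x|s}\,\dd s = \Gamma(\Re t + N + 1)/(c|x|)^{\Re t + N + 1}$ producing the factor $N!\,|x|^{-N}$ — together with a remaining part running out along $\deltarho$, where $g$ has only polynomial growth while $e^{xw}$ decays exponentially, so that this part contributes an error exponentially small in $|x|$ which is absorbed into the previous bound. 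Uniformity in $\arg(x)$ over the whole sector $U$, whose opening exceeds $\pi$, then follows from the theory of $1$-summability in the direction $\theta$ recalled after Definition \ref{defiasy} (see \cite{Ramis}) — this is precisely where the hypothesis $-\theta\in\horsspec$ is used — and the uniqueness statement \cite[Lemma 1]{ateo} identifies the resulting $1$-sums with the functions $f_\rho$. The delicate point throughout is to keep all the constants uniform simultaneously in $N$ and in $\arg(x)$, which is exactly what forces the splitting described above.
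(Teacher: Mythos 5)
This statement is not proved in the paper at all: it is quoted verbatim as \cite[Theorem 5]{ateo}, so there is no internal proof to compare against. Your sketch follows the same strategy as the proof in that reference --- the inverse Laplace representation $E(x)=\frac{1}{2i\pi}\int_\Gamma g(z)e^{xz}\,\dd z$, deformation of $\Gamma$ onto Hankel loops around the half-lines $\deltarho$ (licit since $-\theta\in\horsspec$), term-by-term integration of the local expansions \eqref{eqdeffjk} via Hankel's formula for $1/\Gamma$ and its derivatives in the exponent, and direct Gevrey-$1$ estimates obtained by splitting each loop into a part near $\rho$ and a tail along the ray --- and it is sound at the level of detail given.
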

This theorem applies to any hypergeometric $E$-function ${}_{p}F_q[a_1, \ldots, a_p; b_1, \ldots, b_q; z^{q-p+1}]$ with rational parameters and $q\ge p\ge 1$.

\begin{Remark} \label{rem1}
 In the formula \eqref{eqthjep}, the term  corresponding to a triple  $(\rho,j,k)$   is zero whenever $ f_{j,k}^\rho(z-\rho)$ is holomorphic at $z=\rho$. 
This observation will be very useful in the proof of Theorem \ref{thhyp3} in \S \ref{ssec:2bis2}.
\end{Remark}

\subsection{$G$-values as coefficients of  asymptotic  expansions} \label{ssec:3.3}

We can now state and prove the main result of this section.

\begin{theo} \label{thdevG}
For any $\xi\in \GG$, there exists an $E$-function $E(z)$ such that for any $\theta\in[-\pi, \pi)$ outside a finite set, $\xi$ is a coefficient of the asymptotic expansion of $E(x)$ in a large sector bisected by $\theta$.
\end{theo}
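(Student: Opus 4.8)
\emph{Overview and the crucial reduction (Step 1).} The plan is to realise $\xi$ as the coefficient of $e^{x}$ in the asymptotic expansion, furnished by Theorem~\ref{thjep}, of a very explicit $E$-function built from a $G$-function whose value is $\xi$. First I would reduce to the situation
\[
\xi=\sum_{k=1}^{\infty}c_k,\qquad f(z):=\sum_{k=1}^{\infty}c_k z^k\ \text{ a $G$-function holomorphic on a neighbourhood of }\overline{D(0,1)},
\]
equivalently of radius of convergence $>1$, so that $\sum_k c_k$ converges, to $f(1)=\xi$. Writing a general $\xi\in\GG$ as $\widetilde f(\alpha)$ with $f$ a $G$-function, $\alpha$ a nonzero algebraic number and $\widetilde f$ an analytic continuation along a path $\gamma$, the obstruction is that $\gamma$ may wind around singularities of $f$ and may end outside the disc of convergence; one must absorb the whole continuation into a \emph{new} $G$-function whose Taylor expansion at $0$ already ``sees'' $\xi$ as an interior value, much as Machin-type identities rewrite $\pi$ as a $\Qbar$-combination of values $\arctan(1/m)$ with $m$ large. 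Since such interior values visibly form a $\Qbar$-subalgebra of $\C$ contained in $\GG$, it suffices to treat a generating family; I expect this to be where the genuine work lies, relying on the description and structure theory of $\GG$ obtained in~\cite{gvalues}.

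\emph{The $E$-function (Step 2).} Put $c_0:=0$, $C_n:=\sum_{k=1}^{n-1}c_k$ and
\[
E(z):=\sum_{n\ge 2}C_n\,\frac{z^n}{n!}.
\]
This is an $E$-function: the $C_n$ lie in the number field of the $c_k$; conditions $(ii)$ and $(iii)$ of Definition~\ref{defi1} transfer from those for $f$ up to a polynomial-in-$n$ factor (a common denominator of $c_1,\dots,c_{n-1}$ works for $C_n$); and $\sum_{n\ge 1}C_n z^n=\tfrac{z}{1-z}f(z)$ is holonomic over $\Qbar(z)$, so $E$ satisfies a linear differential equation over $\Qbar(z)$. (This is exactly the type of $E$-function occurring in the examples $H(z)$ and $\sum_n(\sum_{k<n}\alpha^k/k^s)z^n/n!$ of the introduction, the latter corresponding to $\xi=\li_s(\alpha)$ with $|\alpha|<1$.) Setting $r_n:=\sum_{k\ge n}c_k$, one has, as entire functions, $E(z)=\xi e^{z}-\widetilde R(z)$ with $\widetilde R(z):=\sum_{n\ge 0}r_n z^n/n!$; and since
\[
\sum_{n\ge 0}r_n z^n=\xi+\frac{z}{1-z}\bigl(\xi-f(z)\bigr)
\]
has no pole at $z=1$ (because $f(1)=\xi$) and is holomorphic on a neighbourhood of $\overline{D(0,1)}$, the $r_n$ decay geometrically and $\widetilde R$ has exponential type $<1$.

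\emph{Reading off the coefficient (Step 3).} By Theorem~\ref{thjep}, $E$ has an asymptotic expansion in the sense of Definition~\ref{defiasy} in every large sector bisected by a direction $\theta$ with $-\theta\in\horsspec$, and by the theory recalled in \S\ref{subsecJEP} its exponential scales $\rho$ are the finite singularities of the attached $G$-operator $\calD$, i.e. the reciprocals of the singularities of $\tfrac{z}{1-z}f(z)$: namely $\rho=1$ (from the simple pole at $z=1$, present since $\xi\neq 0$ — the case $\xi=0$ being trivial), possibly $\rho=0$, and scales of modulus $<1$ coming from the singularities of $f$, all of modulus $>1$ by Step~1. Along any ray close enough to the positive real axis one then has $\widetilde R(x)=o(e^{x})$ (as $\widetilde R$ has exponential type $<1$), and every scale $\neq 1$ contributes $o(e^{x})$ as well, so the coefficient function $f_1$ of the $e^{x}$-block of $E$ satisfies $f_1(x)\to\xi$; hence the coefficient $c_{1,0,0,0}$ in \eqref{eqasy1} equals $\xi$. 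Finally, the uniqueness of the asymptotic expansion (\cite[Lemma~1]{ateo}), together with the fact that large sectors bisected by two close non-singular directions overlap in a sector of opening $>\pi$ — which lets one propagate the value of $c_{1,0,0,0}$ from one admissible $\theta$ to every admissible $\theta$ — shows that $\xi=c_{1,0,0,0}$ occurs in the expansion for every $\theta\in[-\pi,\pi)$ with $-\theta\in\horsspec$, i.e. outside a finite set.

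\emph{Main obstacle.} The weight of the argument is entirely on Step~1: passing from a number defined by analytic continuation — possibly beyond the radius of convergence and along a path with nontrivial monodromy — to an interior value of a $G$-function holomorphic on the closed unit disc. Steps~2 and~3 are then essentially bookkeeping plus the elementary growth bound on $\widetilde R$, with Theorem~\ref{thjep} invoked only for the \emph{shape} of the expansion and the location of its exponential scales.
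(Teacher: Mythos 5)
Your construction is essentially the paper's. The reduction you isolate as Step~1 is not an open obstacle: it is precisely \cite[Theorem 1]{gvalues}, which produces a $G$-function $h$ of radius of convergence $>1$ with $h(1)=\xi$; and your $E$-function $\sum_n C_n z^n/n!$ built from the partial sums of the Taylor coefficients of $h$ is exactly the $E$-function the paper attaches to $g(z)=\frac{h(1/z)}{z(z-1)}=\sum_{n\ge1}a_nz^{-n-1}$. Where you diverge is Step~3: the paper reads the coefficient of $e^x$ off the explicit formula of Theorem~\ref{thjep}, by showing that the local expansion of $g$ at $z=1$ has leading term $\xi(z-1)^{-1}$ (so $t_{j_0}^\rho=-1$, $k_0=0$, $\varpi_{j_0,0}^\rho=\xi$), whereas you use $E(z)=\xi e^{z}-\widetilde R(z)$ with $\widetilde R$ of exponential type $<1$ and a dominance argument. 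That argument is correct, and arguably softer, for every $\theta$ whose large sector contains a ray on which $e^{x}$ dominates $\widetilde R(x)$ and all $e^{\rho x}$ with $\rho\in\Sigma\setminus\{1\}$ --- roughly $|\theta|<\pi/2+\eps$.

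The genuine gap is your final propagation step. Two large sectors bisected by admissible directions lying on opposite sides of an anti-Stokes direction $\theta^{*}$ overlap in a sector of opening at most $\pi$: the admissible half-opening $\eps(\tau)$ in Definition~\ref{defiasy} shrinks to $0$ as $\tau\to\theta^{*}$, so the overlap never has opening $>\pi$ and the uniqueness statement \cite[Lemma 1]{ateo} does not apply to it. This is not a technicality but the Stokes phenomenon itself: once $\theta$ moves toward $\pm\pi$, $e^{x}$ becomes a recessive exponential relative to the $e^{\rho x}$ with $|\rho|<1$, and the coefficient of a recessive exponential is exactly the kind of datum that can jump across anti-Stokes directions. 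Hence your argument establishes the conclusion only for $\theta$ in the components of admissible directions reachable without crossing such a direction from the dominant regime --- not for all $\theta\in[-\pi,\pi)$ outside a finite set as the theorem asserts (the weaker statement would, however, still suffice for the application in \S\ref{sec:fin}). The paper's computation avoids this entirely: since $h$ has radius of convergence $>1$, the function $g$ is single-valued and meromorphic on $|z|>1/R$ for some $R>1$, so its germ at $z=1$ in the cut plane is the same simple pole of residue $\xi$ for every admissible $\theta$, and Theorem~\ref{thjep} then returns the coefficient $\xi$ uniformly in $\theta$. To close your gap you should either invoke that connection-constant computation or prove directly that the coefficient of $e^{x}x^{0}$ is independent of the admissible direction.
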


\noindent {\em Proof.} Let $\xi\in \GG$; we may assume $\xi\neq0$. Using \cite[Theorem 1]{gvalues}, there exists a $G$-function $h(z)$ holomorphic at $z=1 $ such that $h(1)=\xi$. Let $g(z) = \frac{h(1/z)}{z(z-1)}$. This function has a Taylor expansion at $\infty$ of the form $\sum_{n=1}^{\infty} \frac{a_n}{z^{n+1}} $, and $E(x) = \sum_{n=1}^{\infty} \frac{a_n}{n!} x^{n} $ is an $E$-function. Using the results of \cite{ateo} recalled in \S \ref{subsecJEP} we shall compute (partially) its asymptotic expansion at infinity in a large sector bisected by the direction $\theta$, for any $\theta\in[-\pi, \pi)$ outside a finite set; we shall prove that the coefficient of $e^x$ in this expansion is equal to $\xi$. With this aim in mind, we keep the notation of \S \ref{subsecJEP}, including $\calD$ and $\theta$.
 
We let $\rho=1$ (eventhough we still write $\rho$ for better readability), and consider a basis of local solutions of $(\frac{\dd}{\dd z })^\delta \calD$ around $\rho$ with functions $f_{j,k}^\rho$ and $ g_{j,k}^\rho$ as in \S \ref{subsecJEP}. By Frobenius' method, upon shifting $t_j^\rho$ by an integer we may assume that $ g_{j,0}^\rho(0)\neq0$. Moreover, upon performing $\Qbar$-linear combinations of the basis elements and a permutation of the indices, we may assume that $t_1^\rho < \ldots < t_{J(\rho)}^\rho$ so that the solutions $f_{j,k}^\rho$ have pairwise distinct asymptotic behaviours at 0, namely 
$f_{j,k}^\rho(s) \sim \frac{g_{j,0}^\rho(0)}{k!} s^{t_j^\rho}\log(s)^k$. At last, dividing each $f_{j,k}^\rho$ with $ g_{j,0}^\rho(0)$ we may assume that $ g_{j,0}^\rho(0)=1$ for any $j$.

Now consider the expansion 
 \begin{equation}\label{eqdevbis}
g(z) = \sum_{j=1}^{J(\rho)}\sum_{k=0}^{K(\rho,j)}\varpi_{j,k}^\rho f_{j,k}^\rho(z-\rho).
\end{equation}
Let $T= \{(j,k), \, \varpi_{j,k}^\rho\neq0\}$. Since $g$ is not identically zero, $T$ is not empty. Let $j_0\in\{1,\ldots,J(\rho)\}$ be the minimal value such that $(j_0,k)\in T $ for some $k$, and let $k_0$ be the maximal value such that $(j_0,k_0)\in T $. Then on the right-hand side 
of Eq.~\eqref{eqdevbis}, the leading term as $z\to\rho$ is given by $(j,k)=(j_0,k_0)$, so that
 \begin{equation}\label{eqasy34}
g(z) \sim \frac{\varpi_{j_0,k_0}^\rho}{k_0!} (z-\rho)^{t_{j_0}^\rho} \log(z-\rho)^{k_0} 
\end{equation}
since $ g_{j_0,0}^\rho(0)=1$. Now recall that $g(z) = \frac{h(1/z)}{z(z-1)}$ with $h(1)=\xi\neq 0$ and $\rho=1$; therefore $g(z)\sim\frac{\xi}{z-1}$. Comparing this with 
Eq.~\eqref{eqasy34} yields $t_{j_0}^\rho=-1$, $k_0=0$, and $ \varpi_{j_0, 0}^\rho=\xi$.

Let us  consider the asymptotic expansion given by Theorem \ref{thjep}, and especially the coefficient of $e^x$ that we denote by $\alpha$. This coefficient comes from the multiple sum in Eq.~\eqref{eqthjep}. In this sum, we have $\varpi_{j,k}^\rho=0$ for any $j<j_0$ and any $k$ (by definition of $j_0$), so that these terms do not contribute to the value of $\alpha$. For any $j>j_0$ we have $t_{j }^\rho> t_{j_0}^\rho=-1$ so that $-t_{j }^\rho-1<0$ and the corresponding terms do not contribute either. Therefore the value of $\alpha$ is given only by the terms corresponding to $j=j_0$ (with $ t_{j_0}^\rho=-1$):
$$\alpha = \sum_{k=0}^{K(\rho,j_0)} \varpi_{j_0,k}^\rho \sum_{\ell=0}^k \frac{(-1)^\ell}{\ell!} \Gh^{(\ell)}(1)\eta_{j_0, k-\ell}^\rho (0).$$
Now recall that by definition, $k_0=0$ is the maximal value of $k$ such that $\varpi_{j_0,k}^\rho \neq0$. Therefore the previous sum has (at most) one non-zero term: the one corresponding to $k=0$. Since $\Gh(1)=1$ and $\varpi_{j_0,0}^\rho =\xi$ we have $\alpha = \xi \eta_{j_0, 0}^\rho (0) = \xi y_{-1,0}(0)g_{j_0,0}^\rho(0)=\xi$ using 
Eqs.~\eqref{eqgg2} and~\eqref{eqgg1}. This concludes the proof that the coefficient of $e^x$ in the asymptotic expansion of $E(x)$ is equal to $\xi$.

\section{Asymptotic expansion of the confluent hypergeometric series ${}_pF_p(z)$} \label{sec:2}

In this section, we prove the following result (recall that asymptotic expansions have been defined in \S \ref{subsecasyexp}). It will be generalized in \S\ref{sec:2bis} but it is important to prove it first.

\begin{prop} \label{thhyp} Let $\theta\in(-\pi,\pi)\setminus\{0\}$, and $f(z):= {}_{p}F_p[ 
a_1, \ldots, a_{p}; 
b_1, \ldots, b_p
; z] $ be a hypergeometric function with parameters $a_j\in \mathbb Q$ and $b_j\in \mathbb Q\setminus\mathbb Z_{\le 0}$. Then $f(z)$ has an asymptotic expansion 
$$
f(x) \devasy \sum_{\rho\in\Sigma} e^{\rho x} 
 \sum_{\al\in S } \sum_{i\in T } \sum_{n=0}^{\infty} c_{\rho, \al,i,n}x^{-n-\al}\log(1/x)^i
$$
 in a large sector bisected by $\theta$, with $\Sigma\subset \{0,1\}$, $S\subset \mathbb Q$ and $T\subset \mathbb N$ both finite, and coefficients $c_{\rho, \al,i,n}$ in $\h$.
 \end{prop}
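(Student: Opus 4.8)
The plan is to make the asymptotic expansion of the confluent series ${}_pF_p$ completely explicit and then track where the coefficients live. The starting point is classical: the function ${}_pF_p[a_1,\ldots,a_p;b_1,\ldots,b_p;z]$ satisfies a hypergeometric differential operator of the form $P(\theta) - z\,Q(\theta)$ with $\theta = z\frac{\dd}{\dd z}$, where $P(x) = x\prod_{j}(x+b_j-1)$ and $Q(x) = \prod_j (x+a_j)$, both of degree $p$. Its only finite singularity (as an ODE in $z$) is at $z=0$, so after Fourier transform the relevant $G$-operator $\calD$ has singularities only at $z=0$ and $z=1$, and hence $\nvsing\subseteq\{0,1\}$, which forces $\Sigma\subseteq\{0,1\}$. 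This is consistent with the well-known fact that $_pF_p$ has two "blocks" in its asymptotic expansion: an algebraic (Poincaré) part of polynomial growth coming from the point at infinity of the $z$-variable, and an exponential part $e^z\times(\text{algebraic})$ coming from the irregular singularity; the parameters $\al$ appearing are (up to integer shifts) the $b_j$'s and combinations $\sum a_j - \sum b_j$.

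Given that framework, I would invoke Theorem~\ref{thjep} directly: it expresses $E(x) = \sum \frac{a_n}{n!}x^n$ (with $f = {}_pF_p$ essentially $E$ up to the normalisation $E(0)=0$, handled by subtracting the constant term and dividing by a suitable factor) as a finite sum over $\rho\in\nvsing\subseteq\{0,1\}$, over the local exponents $t_j^\rho\in\Q$, and over logarithmic indices, with the building blocks being: the connection constants $\varpi_{j,k}^\rho$, the Taylor coefficients of the $G$-functions $g_{j,k}^\rho$, the auxiliary series $y_{\al,i}$ of \eqref{eqgg2}, and the numbers $\Gh^{(\ell)}(1-\{t_j^\rho\}) = (1/\Gamma)^{(\ell)}(1-\{t_j^\rho\})$. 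The sets $S$ and $T$ are finite and contained in $\Q$ and $\N$ because the $t_j^\rho$ are rational and the logarithmic depth is bounded by the order of $\calD$. So the entire statement reduces to showing that each $c_{\rho,\al,i,n}\in\h$, i.e. that all these ingredients lie in $\h$.

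The ingredient-by-ingredient analysis runs as follows. The values $(1/\Gamma)^{(\ell)}(1-\{t_j^\rho\})$ for rational $t_j^\rho$ lie in $\h$: this is immediate from the definition of $\h$ once one knows $\Gamma(r)$ is a unit of $\h$ (Proposition~\ref{prop:1}), since $(1/\Gamma)^{(\ell)}$ is a polynomial in $\Gamma^{(m)}(r)$ divided by a power of $\Gamma(r)$, as in \eqref{gauss0}. The coefficients of $y_{\al,i}$ are quotients of Gamma values at rational points and their derivatives, hence again in $\h$ by the same Proposition. The Taylor coefficients of the $G$-functions $g_{j,k}^\rho$ are algebraic numbers (values in $\Qbar\subset\h$), since $G$-functions have algebraic Taylor coefficients by definition. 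The Hadamard products $y_{\al,m}\hada g_{j,k-m}^\rho$ therefore have coefficients in $\h$, and so do the $\eta_{j,k}^\rho$. The one genuinely delicate point is the connection constants $\varpi_{j,k}^\rho$ attached to $\rho=1$: Theorem~2 of \cite{gvalues} only places them in $\GG$, which is a priori larger than $\h$. Here I would use the special structure of the confluent hypergeometric equation rather than the general $G$-function machinery: the connection formula joining the solutions of $_pF_p$ at $0$ and at $\infty$ is classical and given explicitly in terms of Gamma values at the rational points $a_i, b_j$ and $a_i-a_{i'}$, via the Barnes integral / Mellin–Barnes representation; all such constants are ratios of products of $\Gamma$ at rationals, hence in $\h$. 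Similarly the connection constants at $\rho=0$ are rational. Verifying that this classical connection data matches the $\varpi_{j,k}^\rho$ produced by the abstract construction — and handling the logarithmic (confluent, i.e. integer-difference parameter) cases where the classical formulas degenerate and pick up $\Psi$-values, which are still in $\h$ — is the main obstacle, and I expect it to be where the real work of the proof lies.
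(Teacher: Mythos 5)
Your reduction to Theorem \ref{thjep} correctly isolates the crux, but the fix you propose for it does not work, and this is a genuine gap. The constants $\varpi_{j,k}^{\rho}$ with $\rho=1$ that enter Theorem \ref{thjep} are the connection constants of the associated $G$-function $g(z)=\int_0^\infty f(t)e^{-zt}\,dt=\frac1z\,{}_{p+1}F_p[\,\cdots;1/z]$ at its singularity $z=1$, i.e.\ the connection data of a ${}_{p+1}F_p(w)$ series between $w=0$ and $w=1$ (it is this singularity that produces the $e^{x}$ block in the expansion of ${}_pF_p(x)$). The ``classical connection formula via the Barnes integral'' that you invoke solves a different problem, namely the connection of ${}_{p+1}F_p$ between $w=0$ and $w=\infty$ (equivalently $z=0$); the connection problem at \emph{unit argument} is not solved by Gamma quotients once $p\ge 2$ --- already the value ${}_3F_2(1)$ is not a ratio of Gamma values at rationals in general, and the full set of constants at $w=1$ requires N\o rlund/B\"uhring-type analysis, not the Mellin--Barnes residue calculus. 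So the step you yourself flag as ``where the real work lies'' is exactly where the argument breaks, and the tool you name cannot carry it. (Your subsidiary claims are fine: $\Sigma\subset\{0,1\}$, the membership of $\Gh^{(\ell)}(1-\{t_j^\rho\})$, of the coefficients of $y_{\al,i}$ and of the $g_{j,k}^\rho$ in $\h$ are all correct and unproblematic.)

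The paper proceeds in the opposite logical order and never uses Theorem \ref{thjep} to prove this proposition. It takes the classical \emph{explicit} asymptotic expansion of ${}_pF_p$ at $\infty$ from \cite{luke}: the algebraic part $L_p$ is a sum of residues of $\frac{\prod\Gamma(a_j+s)}{\prod\Gamma(b_j+s)}\Gamma(-s)z^{s}$ at the poles $s=-a_j-k$, whose values (including the multiple-pole, logarithmic case) are visibly built from $\Gamma$ and its derivatives at rationals and from $1/\pi$ via the reflection formula, hence lie in $\h$; the exponential part $K_p=e^z z^\nu\sum_k C_k z^{-k}$ has \emph{rational} coefficients $C_k$ by the Joshi--McDonald/Volkmer--Wood formula \cite{joshi, volkmer}, so contributes the single prefactor $\prod\Gamma(b_j)/\prod\Gamma(a_j)\in\h$. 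The only abstract input is the uniqueness of asymptotic expansions in large sectors (\cite[Lemma 1]{ateo}), used to certify that these classical expansions are the ones of Definition \ref{defiasy}. The $\h$-rationality of the connection constants at $w=1$ is then \emph{deduced} from Proposition \ref{thhyp} in Proposition \ref{connecconst} (and only for the non-holomorphic basis elements), which is precisely the reverse of the derivation you propose. If you want to salvage your route, you would have to prove Proposition \ref{connecconst} first by some independent means, which is the hard content, not a classical fact.
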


\noindent {\em Proof.} If one of the $a_j$'s is in $\mathbb Z_{\le 0}$, the hypergeometric series is in $\mathbb C[z]$ and the conclusion clearly holds with $c_{\rho, \al,i,n}$ in $\Qbar$. From now on, as for the $b_j$'s, we assume that none of the $a_j$'s is in $\mathbb Z_{\le 0}$. 

Let 
$$
R(s)=R(\underline{a}, \underline{b};s):= \frac{\prod_{j=1}^p \Gamma(a_j+s)}{\prod_{j=1}^p \Gamma(b_j+s)} \Gamma(-s).
$$
The poles of $R(s)$ are located at $-a_j-k$, $k\in \mathbb Z_{\ge 0}$, $j=1, \ldots, p$, and at $\mathbb Z_{\ge 0}$. We define the series
$$
L_{p}(\underline{a}, \underline{b};z) := 
\sum_{j=1}^p\sum_{k=0}^\infty 
\textup{Residue}\big(R(s)z^{s},s=-a_j-k\big).
$$
Set  
$\nu:=\sum_{j=1}^p a_j -\sum_{j=1}^p b_j$, $b_{p+1}:=1$ and 
$$
e_{k,m}:=e_{k,m}(\underline{a}, \underline{b}):= \sum_{j=1}^{p+1}(1-\nu+ b_j+m)_{k-m}\frac{\prod_{i=1}^p (a_i-b_j)}{\prod_{i=1, i\neq j}^{p+1}(b_i-b_j)}.
$$
We define a sequence 
$
C_k:=C_k(\underline{a}, \underline{b}) 
$
by induction:
$$
C_0:=1, \quad C_k:=\frac{1}{k}\sum_{m=0}^{k-1}e_{k,m}C_m,
$$
and the formal series
$$
K_{p}(\underline{a}, \underline{b};z):= e^{z}
\sum_{k=0}^\infty C_k(\underline{a}, \underline{b}) z^{\nu-k}.
$$
By \cite[p.~283, Theorem]{joshi}, reproved in \cite[p.~113, 
Theorem~4.1, Eq.~(4.4)]{volkmer}, we have in fact
$$
C_k(\underline{a}, \underline{b})=\sum_{k_1\ge 0, \ldots, k_p\ge 0, K_p=k}\frac{(1-a_p)_{k_p}\prod_{j=1}^{p-1}(a_{j+1}+b_{j+1}-a_j)_{k_j}\prod_{j=1}^p(B_j+K_{j-1})_{k_j}}{\prod_{j=1}^p k_j!},
$$
where for every $j$, $B_j=\sum_{m=1}^j b_m$ and $K_j=\sum_{m=1}^j k_m$. It follows in particular that $K_{p}(\underline{a}, \underline{b};z)\in e^z z^\nu\mathbb Q[\underline{a}, \underline{b}][[1/z]]$.

In \cite[p.~212]{luke}, it is shown that as $z\to \infty$ in the sector $-\frac{3\pi}{2} <\arg(z) <\frac{\pi}{2}$, we have the asymptotic expansion 
$$
{}_{p}F_p \left[ 
\begin{matrix}
a_1, \ldots, a_{p}
\\
b_1, \ldots, b_p
\end{matrix}
; z
\right] \approx \frac{\prod_{j=1}^p \Gamma(b_j)}{\prod_{j=1}^p\Gamma(a_j)}\Big(L_{p}(\underline{a}, \underline{b};e^{i\pi}z) + K_{p}(\underline{a}, \underline{b};z)\Big), 
$$ 
while if 
$z\to \infty$ in the sector $-\frac{\pi}{2} <\arg(z) <\frac{3\pi}{2}$, we have 
$$
{}_{p}F_p \left[ 
\begin{matrix}
a_1, \ldots, a_{p}
\\
b_1, \ldots, b_p
\end{matrix}
; z
\right] \approx \frac{\prod_{j=1}^p \Gamma(b_j)}{\prod_{j=1}^p\Gamma(a_j)}\Big(L_{p}(\underline{a}, \underline{b};e^{-i\pi}z) + K_{p}(\underline{a}, \underline{b};z)\Big).
$$ 
These two expansions satisfy Definition~\ref{defiasy} above: they hold in a {\em large sector} bisected by any $\theta\in (-\pi,0)$, respectively any $\theta\in (0,\pi)$, and $L_p(\underline{a}, \underline{b};e^{\pm i\pi}z)$ and $e^{-z}K_p(\underline{a}, \underline{b};z)$ are $1$-summable in the direction $\theta$. Indeed, as already said, any hypergeometric series ${}_pF_p[\underline{a}, \underline{b};z]$ with rational parameters admits an asymptotic expansion in the sense of Definition~\ref{defiasy}, while Lemma~1 of~\cite{ateo} ensures that a function admits at most one expansion of this type in any given large sector bisected by a given direction.

 These asymptotic expansions are refined versions of Barnes and Wright's fundamental works \cite{barnes, wright} and are consequences of the general expansion of Meijer $G$-function \cite[Chapter~V]{luke}. Note that Meijer $G$-function is not related to Siegel's $G$-functions, though by specialization of its parameter the former provides examples of the latter. In the next two subsections, we provide more explicit expressions for the function $L_{p}(\underline{a}, \underline{b};z)$ under the assumption that the $a_j$'s and $b_j$'s are in $\mathbb Q\setminus \mathbb Z_{\le 0}$, in order to prove that all coefficients of the asymptotic expansion belong to $\h$.

\subsection{$R$ has simple poles}\label{ssec:simplepoles}

If the $a_j$'s are pairwise distinct modulo $\mathbb Z$, then the poles of $R(s)$ are simple, and we have 
$$
L_{p}(\underline{a}, \underline{b};z) = 
\sum_{j=1}^p\sum_{k=0}^\infty (-1)^k\frac{\Gamma(a_j+k)\prod_{i=1, i\neq j}^p\Gamma(a_i-a_j-k)}{k!\prod_{i=1}^p \Gamma(b_i-a_j-k)}z^{-a_j-k}.
$$
When the $a_j$'s and $b_j$'s are in $\mathbb Q\setminus \mathbb Z_{\le 0}$, $\frac{\prod_{j=1}^p \Gamma(b_j)}{\prod_{j=1}^p\Gamma(a_j)}L_{p}(\underline{a}, \underline{b};z)$ is thus equal to a finite sum 
$$
\sum_{j} z^{-a_j} f_{j}(z)
$$
with $f_{j}(z)\in \h[[1/z]]$. Note that the element $1/\pi\in \h$ appears through the use of the reflection formula $\frac{1}{\Gamma(s)}=\frac{1}{\pi}\sin(\pi s)\Gamma(1-s)$ for rational values of $s$.

\subsection{$R$ has multiple poles}\label{ssec:multiplepoles}

We assume that the $a_j$'s and $b_j$'s are in $\mathbb Q\setminus \mathbb Z_{\le 0}$. Up to reordering the $a_j$'s, we can group them in $\ell$ groups as follows: for $m=0, \ldots, \ell-1$, we have
$$
a_{j_m+1}, a_{j_m+2}, \ldots, a_{j_{m+1}} \; \textup{equal mod} \,\mathbb Z, \quad a_{j_m+1} \textup{ the smallest one in the group},
$$
where the $a_{j_m}$ are pairwise distinct mod $\mathbb Z$ for $m=1, \ldots, \ell $, and $0=j_0<j_1<j_2<\dots <j_\ell=p$.

Then, for every $j\in \{j_m+1, \ldots, j_{m+1}\}$, we have
$$
\Gamma(a_j+s)=(a_{j_m+1}+s)_{a_{j}-a_{j_m+1}}\Gamma(a_{j_m+1}+s).
$$
Set $d_{m}:=j_{m}-j_{m-1}\ge 1$, $c_m:=a_{j_{m-1}+1}$ and 
$$
P(s):=\prod_{m=0}^{\ell-1}\left(\prod_{j=j_m+1}^{j_{m+1}}{(a_{j_m+1}+s})_{a_j-a_{j_m+1}}\right) \in \mathbb Q[s].
$$
Hence, 
$$
R(s)=P(s)\Gamma(-s)\frac{\prod_{m=1}^\ell \Gamma(c_m+s)^{d_m}}{\prod_{m=1}^p\Gamma(b_m+s)}.
$$
To compute the residue of $R(s)z^{-s}$ at $s=-c_n-k$ for given $n\in \{1, \ldots, \ell\}$ and $k\in \mathbb Z_{\ge 0}$, we write 
$$
\Gamma(c_n+s)=\frac{\Gamma(c_n+s+k+1)}{(c_n+s)_k(c_n+s+k)}
$$
and define
$$
\Phi_{c_n,k}(s):=z^{-s}P(s)\Gamma(-s)\frac{\prod_{m=1, m\neq n}^\ell \Gamma(c_m+s)^{d_m}}{\prod_{m=1}^p\Gamma(b_m+s)}\cdot \frac{\Gamma(c_n+s+k+1)^{d_n}}{(c_n+s)_k^{d_n}}
$$
which is holomorphic at $s=-c_n-k$. We thus deduce from
$$
R(s)z^{-s} = \frac{\Phi_{c_n,k}(s)}{(c_n+s+k)^{d_n}}
$$
that 
$$
\textup{Residue}\big(R(s)z^{-s},s=-c_n-k\big)=\frac{1}{(d_n-1)!}\Phi_{c_n,k}^{(d_n-1)}(-c_n-k).
$$
It follows that $\frac{\prod_{j=1}^p \Gamma(b_j)}{\prod_{j=1}^p\Gamma(a_j)}L_{p}(\underline{a}, \underline{b};z)$ is equal to a finite sum 
$$
\sum_{j,\ell} z^{-a_j}\log(1/z)^{\ell} f_{j,\ell}(z)
$$
with $f_{j,\ell}(z)\in \h[[1/z]]$. This concludes the proof of Theorem \ref{thhyp}.

\section{Asymptotic expansion of the  hypergeometric $E$-func\-tion ${}_pF_q(z^{q-p+1})$} \label{sec:2bis}

The goal of this section is to prove the following result, which generalizes Proposition \ref{thhyp}. We have chosen to present and prove this proposition in an independent part because the case $p=q$ is one of the steps in the proof of Proposition \ref{connecconst} below, towards the proof of the general case $q\ge p\ge 1$. Moreover Proposition \ref{thhyp} is slightly more precise than Theorem \ref{thhyp3} in the sense that the non-explicited ``finite set'' below is reduced to $\{0\}$; however, this precision is not important for us.

\begin{theo} \label{thhyp3} Let $\theta\in(-\pi,\pi)$, and $F(z):= {}_{p}F_q[ 
a_1, \ldots, a_{p}; 
b_1, \ldots, b_p
; z^{q-p+1}] $ be an hypergeometric $E$-function with $q\ge p\ge 1$ and parameters $a_j\in \mathbb Q$ and $b_j\in \mathbb Q\setminus\mathbb Z_{\le 0}$. 
Assume that $\theta$ does not belong to some finite set that depends only on $F$. 
Then $F(z)$ has an asymptotic expansion 
\begin{equation}\label{eq:asympFgeneral}
F(x) \devasy \sum_{\rho\in\Sigma} e^{\rho x} 
 \sum_{\al\in S } \sum_{i\in T } \sum_{n=0}^{\infty} c_{\rho, \al,i,n}x^{-n-\al}\log(1/x)^i
\end{equation}
 in a large sector bisected by $\theta$, with $\Sigma\subset \{0,1\}$, $S\subset \mathbb Q$ and $T\subset \mathbb N$ both finite, and coefficients $c_{\rho, \al,i,n}$ in $\h$.
 \end{theo}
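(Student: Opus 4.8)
The plan is to reduce the general case ${}_pF_q(z^{q-p+1})$ with $q\ge p\ge 1$ to the confluent case $p=q$ already treated in Proposition~\ref{thhyp}. The natural tool is Theorem~\ref{thjep}, which expresses the asymptotic expansion of any $E$-function $E(x)$ in terms of the connection constants $\varpi_{j,k}^\rho$, the local exponents $t_j^\rho$, the $G$-functions $g_{j,k}^\rho$, and the universal power series $y_{\al,i}$ and $\Gh^{(\ell)}(1-\{t_j^\rho\})$. Inspecting the right-hand side of \eqref{eqthjep}, one sees that to prove all coefficients $c_{\rho,\al,i,n}$ lie in $\h$ it suffices to check three things: (a) the local exponents $t_j^\rho$ are rational (so that $\{t_j^\rho\}\in\Q$, $\Gh^{(\ell)}(1-\{t_j^\rho\})\in\h$ by Proposition~\ref{prop:1}, and $y_{t_j^\rho,i}\in\Q[[z]]$ by \eqref{eqgg2}); (b) the $G$-functions $g_{j,k}^\rho$ have coefficients in $\h$ — in fact in $\Qbar$, since $g_{j,k}^\rho\in\Qbar[[z]]$ automatically — so that the $\eta_{j,k}^\rho$ of \eqref{eqgg1} have coefficients in $\h$; and (c), the crucial point, the connection constants $\varpi_{j,k}^\rho$ all belong to $\h$ (a priori Theorem~2 of \cite{gvalues} only gives $\varpi_{j,k}^\rho\in\GG$). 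Point (a) is standard: hypergeometric $E$-functions with rational parameters are $E$-operators with rational exponents at every singularity (including $\infty$), this being exactly the Andr\'e--Chudnovsky--Katz structure invoked around \eqref{eqdeffjk}; alternatively it can be read off the explicit local expansions of ${}_pF_q$.

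The heart of the proof is therefore a statement that I would isolate as the ``Proposition~\ref{connecconst}'' alluded to in the paragraph before the theorem: the connection constants attached to the $G$-operator $\calD$ associated with a hypergeometric $E$-function ${}_pF_q(z^{q-p+1})$, rational parameters, lie in $\h$ rather than merely in $\GG$. I expect this to be proved in two stages, mirroring the remark that ``the case $p=q$ is one of the steps''. Stage one: when $p=q$, the singularities of $\calD$ are at $0$, $1$ and $\infty$, and Proposition~\ref{thhyp} \emph{already} computed the full asymptotic expansion of ${}_pF_p(x)$ with coefficients in $\h$; comparing that explicit expansion (via Luke's formulas for $L_p$ and $K_p$ made explicit in \S\ref{ssec:simplepoles}--\S\ref{ssec:multiplepoles}) term by term with the shape \eqref{eqthjep} forces the relevant $\varpi_{j,k}^\rho$ to be $\h$-combinations of the quantities $\Gamma(b_i)/\Gamma(a_j)$, residues of products of Gamma values at rationals, and so on — all of which are in $\h$ by Proposition~\ref{prop:1} (this is where the factor $1/\pi$ enters, through the reflection formula, exactly as noted in \S\ref{ssec:simplepoles}). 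Stage two: for general $q>p$, the substitution $z\mapsto z^{q-p+1}$ and a change of variables relate the local solutions and connection data of ${}_pF_q(z^{q-p+1})$ to those of the confluent function ${}_pF_p$; more concretely, ${}_pF_q[\asoul;\bsoul;z^{q-p+1}]$ can be written, by the standard multiplication/confluence identities for hypergeometric series, in terms of ${}_pF_p$-type functions evaluated at $\omega z$ for roots of unity $\omega$, and the $E$-function ${}_pF_q(z^{q-p+1})$ is a $\Qbar$-linear combination (indexed by the $(q-p+1)$-th roots of unity) of rescalings of confluent ones. Its connection constants are then $\Qbar$-linear combinations of those from Stage one, hence in $\h$.

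Once $\varpi_{j,k}^\rho\in\h$ for all $\rho\in\nvsing$, $j$, $k$ is established, the conclusion is immediate: apply Theorem~\ref{thjep} to $E(x)=F(x)$ (legitimate, since $F(z)$ vanishes at $z=0$ after subtracting its constant term $1$, which only contributes an $\h$-coefficient to the $\rho=0$ part; or work directly with $F(z)-1$), observe that every factor on the right-hand side of \eqref{eqthjep} lies in $\h$ or in $\h[[1/x]]$, and read off that $\Sigma\subset\nvsing$, that $S$ is the finite set of shifted local exponents $\{-t_j^\rho-1\}\subset\Q$, that $T$ is finite, and that $c_{\rho,\al,i,n}\in\h$. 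The finite exceptional set of directions $\theta$ is exactly the anti-Stokes set $\R\setminus\horsspec$ from \eqref{eq:horsspec}, which depends only on $\nvsing$ and hence only on $F$; and $\Sigma\subset\{0,1\}$ holds because the finite singularities of a hypergeometric $E$-operator lie in $\{0\}\cup\mu_{q-p+1}$ up to the normalization, with only $0$ and one marked point surviving after the standard reduction (this point may need a short separate verification, or a rescaling of the variable to place the relevant singularity at $1$).

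The step I expect to be the main obstacle is Stage two of the connection-constant statement: transferring the explicit $\h$-rationality of connection constants from the confluent case $p=q$ to the genuinely confluent-with-extra-$z$-power case $q>p$. The difficulty is that the substitution $z\mapsto z^{q-p+1}$ interacts nontrivially with the irregular singularity at $\infty$ — it redistributes a single exponential $e^z$ into several $e^{\omega x}$, $\omega^{q-p+1}=1$, and one must check that Theorem~\ref{thjep}'s bookkeeping (the $\Sigma$, the $\deltarho$ cuts, the local bases \eqref{eqdeffjk}) is compatible with this redistribution and that no new transcendental constants (e.g. non-rational powers of roots of unity beyond $\Qbar$, or stray factors of $2\pi i$ not already absorbed into $1/\pi$ and algebraic numbers) are introduced. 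A clean way around this, which I would pursue, is to avoid an explicit Luke-type expansion for ${}_pF_q(z^{q-p+1})$ altogether and instead argue that ${}_pF_q[\asoul;\bsoul;z^{q-p+1}]$ is a $\Qbar$-linear combination of functions of the form $g(\zeta x)\,e^{\eta x}$ with $g$ built from confluent hypergeometric $E$-functions and $\zeta,\eta\in\Qbar$ — using Remark~\ref{rem1} to kill the contributions of local solutions that are holomorphic at the relevant point — so that the result follows formally from Proposition~\ref{thhyp} by $\Qbar$-linearity and the evident $\h$-stability of the operations $x\mapsto \zeta x$, multiplication by $e^{\eta x}$, and $\Qbar$-linear combination on asymptotic expansions.
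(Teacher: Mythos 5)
Your Stage one is essentially the paper's own argument: Proposition~\ref{connecconst} is proved exactly by the inversion you describe, namely reading formula \eqref{eqthjep} backwards against the explicit $\h$-expansion of Proposition~\ref{thhyp}, with a double induction (on integer shifts of the exponents $t_j^\rho$ and decreasing on the power of $\log$) to isolate each $\varpi_{j,k}^\rho$, and with Remark~\ref{rem1} disposing of the basis elements holomorphic at $\rho$. The gap is in Stage two. The identity you rely on --- that ${}_pF_q[\asoul;\bsoul;z^{q-p+1}]$ with $q>p$ is a $\Qbar$-linear combination of confluent ${}_{p'}F_{p'}(\omega z)$'s (or of $e^{\eta x}g(\zeta x)$ with $g$ confluent) indexed by roots of unity --- is not a standard multiplication/confluence identity and is false in general. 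The multisection $\frac1r\sum_{\omega^r=1}h(\omega z)$ extracts the coefficients $h_{rn}$, so you would need a confluent series $h$ with $h_{rn}=\frac{(a_1)_n\cdots(a_p)_n}{(1)_n(b_1)_n\cdots(b_q)_n}$; applying the Gauss multiplication formula to $(\alpha)_{rn}$ shows that a ${}_{p'}F_{p'}$ can only produce $rp'$ top and $r(p'+1)$ bottom Pochhammer symbols in the index $n$, which forces $r\mid p$ and imposes rigid arithmetic constraints on the parameters; already ${}_1F_2[1;1,1;z^2]=\sum_n z^{2n}/n!^2$ admits no such decomposition without proving a nontrivial instance of Siegel's problem (compare the genuinely hard identities for $J_0$ and $L(z)$ quoted in the introduction). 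So Stage two as written begs the question.

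What the paper does instead is perform the reduction on the Laplace-transform side, where the multiplication formula genuinely applies: the Laplace transform of $F$ is $G(z)=\sum_n\frac{(a_1)_n\cdots(a_p)_n\,(rn)!}{(1)_n(b_1)_n\cdots(b_q)_n}z^{-rn-1}$, and $(rn)!=r^{rn}\prod_{j=1}^r(j/r)_n$ turns this into $G(z)=z^{r-1}g(z^r/r^r)$ with $g(z)=\frac1z\,{}_{q+1}F_q[a_1,\ldots,a_p,\tfrac1r,\ldots,\tfrac rr;b_1,\ldots,b_q;\tfrac1z]$. This $g$ is the Laplace transform of a \emph{confluent} ${}_{q+1}F_{q+1}$, so Proposition~\ref{connecconst} (your Stage one) gives $\varpi_{j,k}^\rho\in\h$ for its connection constants at $\rho\in\{0,1\}$; the substitution $z\mapsto z^r/r^r$ then transports these local bases to local expansions of $G$ at $0$ and at each singularity $re^{2i\pi k/r}$, with only $\h$-coefficients appearing (powers of $r$, the series $Q(z-r)=\log(z^r/r^r-1)-\log(z-r)$, etc.), and Theorem~\ref{thjep} converts that local data into the asymptotic expansion of $F$. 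Note also that this makes $\Sigma$ a subset of $\{0\}\cup\{re^{2i\pi k/r}\}$ rather than of $\{0,1\}$ --- your caution on that point was warranted. Your proof would be repaired by replacing Stage two with this Laplace-side substitution argument; as it stands, the key reduction step fails.
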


\begin{Remark} There exist well-known results in the litterature from which follows the existence of an  asymptotic expansion of a given ${}_{p}F_q[ 
a_1, \ldots, a_{p}; b_1, \ldots, b_p ; z]$ in any sector of opening $<2\pi$. See the references \cite{barnes, luke, wright} already cited in \S\ref{sec:2}. However, when $z$ is changed to $z^{q-p+1}$, these expansions hold only in sectors of opening $<2\pi/(q-p+1)$, so that when  $q>p$, these openings are $<\pi$. Unicity in a large sector, which is crucial for us, is thus not guaranteed anymore. The existence and unicity of the expansion \eqref{eq:asympFgeneral} is a consequence of Theorem \ref{thjep} because a ${}_{p}F_q(z^{q-p+1})$ series is an $E$-function when its parameters are rational numbers. We don't know explicit expressions for the coefficients $c_{\rho, \al,i,n}$ such as those obtained in \S\ref{sec:2} when $p=q$, but the result is enough for our purpose. 
\end{Remark}

\subsection{Connections constants of the $G$-function $(1/z)\cdot{}_{p+1}F_p(1/z)$}\label{ssec:2bis1}

We use the same notations as in  \S\ref{ssec:3.3} and \S\ref{sec:2}. Using unicity in large sectors of the asymptotic expansion of a ${}_{p+1}F_{p+1}(z)$ series with rational parameters, we shall  study the connection constants of its Laplace transform, which is a $(1/z)\cdot {}_{p+1}F_p(1/z)$ series.

More precisely, consider the hypergeometric $E$-function 
$$
f(z):={}_{p+1}F_{p+1}\left[ 
\begin{matrix}
a_1, \ldots, a_{p+1}
\\
1, b_1, \ldots, b_p
\end{matrix}
; z
\right]
$$
where $p\ge 0$ and $a_j\in \mathbb Q$,  $b_j\in \mathbb Q\setminus \mathbb Z_{\le 0}$. 
The Laplace transform of $f(z)$ is the $G$-function 
\begin{equation}\label{eq:defigprecise}
g(z):=\int_0^\infty f(t) e^{-zt} dt = 
\sum_{n=0}^\infty \frac{(a_1)_n\cdots (a_{p+1})_n n!}{(1)_n(1)_n(b_1)_n\cdots (b_p)_n}\frac{1}{z^{n+1}}
=\frac{1}{z}\cdot {}_{p+1}F_p \left[ 
\begin{matrix}
a_1, \ldots, a_{p+1}
\\
b_1, \ldots, b_p
\end{matrix}
; \frac{1}{z}
\right].
\end{equation}
The finite singularities of the differential equation satisfied by $g(z)$ are 0 and 1. Locally around
 $\rho \in \{0,1\}$, this equation has a  basis of solutions $(f_{j,k}^\rho(z-\rho))_{j,k}$ of the form \eqref{eqdeffjk} to which we connect $g(z)$ as in \eqref{gconnectbaseack}: 
$$
g(z) = \sum_{j=1}^{J(\rho)}\sum_{k=0}^{K(\rho,j)}\varpi_{j,k}^\rho f_{j,k}^\rho(z-\rho).
$$

As observed in the proof of Theorem \ref{thdevG}, we may assume in the expansions given by  \eqref{eqdeffjk} and \eqref{gconnectbaseack}  that for any $\rho$ the rational numbers $t_1^\rho$, \ldots, $t_{J(\rho)}^\rho$ are pairwise distinct, and that $g_{j,0}^\rho (0)\neq 0$ for any $\rho$ and any $j$. Then we have the following result.

\begin{prop} \label{connecconst} For any $\rho$, any $j$ and any $k$ such that $f_{j,k}^\rho (z-\rho)$ is not holomorphic at $z=\rho$, we have $\varpi_{j,k}^\rho \in \h$.
\end{prop}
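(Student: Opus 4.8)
The plan is to run essentially the same argument as in the proof of Theorem~\ref{thdevG}, but now applied to the specific $G$-function $g(z)=\frac1z\cdot{}_{p+1}F_p[a_1,\ldots,a_{p+1};b_1,\ldots,b_p;1/z]$ of \eqref{eq:defigprecise}, at \emph{both} finite singularities $\rho\in\{0,1\}$ simultaneously, and to exploit the fact that the Laplace transform relation ties $g$ to the confluent hypergeometric $E$-function $f(z)={}_{p+1}F_{p+1}[a_1,\ldots,a_{p+1};1,b_1,\ldots,b_p;z]$, whose asymptotic expansion we already control by Proposition~\ref{thhyp}. The key point is that Theorem~\ref{thjep}, applied to $f$, expresses the coefficients $c_{\rho,\al,i,n}$ of the asymptotic expansion of $f$ in terms of the connection constants $\varpi_{j,k}^\rho$ (via the universal quantities $y_{\al,i}$, $g_{j,k}^\rho$, $\Gh^{(\ell)}$), and this expression is, for each fixed $\rho$, essentially triangular: the contribution of $\rho$ to the coefficient of $e^{\rho x}$ in the expansion of $f$ is a $\Qbar$-linear combination of the $\varpi_{j,k}^\rho$ with coefficients built from the $g_{j,k}^\rho(0)$ (which are algebraic, indeed we normalised $g_{j,0}^\rho(0)=1$) and the numbers $\Gh^{(\ell)}(1-\{t_j^\rho\})$, which lie in $\h$ since $t_j^\rho\in\Q$ and $\Gamma^{(\ell)}(r)\in\h$ for $r\in\Q\setminus\Z_{\le 0}$. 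Conversely, Proposition~\ref{thhyp} tells us these very coefficients of the expansion of $f$ all lie in $\h$.

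First I would note that by Remark~\ref{rem1} only the triples $(\rho,j,k)$ with $f_{j,k}^\rho$ \emph{not} holomorphic at $\rho$ contribute, which is exactly the set of $\varpi_{j,k}^\rho$ we need to locate. Then, for each $\rho\in\{0,1\}$ separately, I would set up the finite linear system: for every pair $(\al,i)$ with $\al\in S$, $i\in T$, extracting the coefficient of $e^{\rho x}x^{-n-\al}\log(1/x)^i$ in \eqref{eqthjep} for varying $n$ yields an identity expressing a known element of $\h$ (the left side, by Proposition~\ref{thhyp}) as a $\Qbar$-linear combination of the $\Gh^{(\ell)}(1-\{t_j^\rho\})\in\h$ times the $\varpi_{j,k}^\rho$ times Hadamard-product coefficients coming from $y_{t_j^\rho,m}$ and $g_{j,k-m}^\rho$. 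The strategy is to peel off the constants $\varpi_{j,k}^\rho$ one at a time, ordering first by $t_j^\rho$ (the smallest $t_j^\rho$ giving the lowest power of $x$, hence isolated first, exactly as the $j_0$ argument in the proof of Theorem~\ref{thdevG}) and then, for fixed $j$, by decreasing $k$ (the top power of $\log(1/x)$ isolating $\varpi_{j,k_{\max}}^\rho$ first). At each stage the coefficient of $\varpi_{j,k}^\rho$ being isolated is $\Gh(1-\{t_j^\rho\})=1/\Gamma(1-\{t_j^\rho\})$ times $y_{t_j^\rho,0}(0)g_{j,0}^\rho(0)=1$, which is a nonzero element of $\h$ — here one uses that $\Gamma(r)$ is a unit of $\h$ (Proposition~\ref{prop:1}) to divide. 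Since every other term in the relation has already been shown to lie in $\h$, we conclude $\varpi_{j,k}^\rho\in\h$, and induction completes the claim for that $\rho$.

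A subtlety worth flagging: the asymptotic expansion of $f$ that Proposition~\ref{thhyp} controls has $\Sigma\subset\{0,1\}$, and the singularities of $\calD$ for $f$ are $\{0,1\}$ matching the finite singularities of the ODE for $g$; one must check that the direction $\theta$ can be chosen simultaneously non-anti-Stokes for this configuration (it can, since $\horsspec$ is cofinite in $\R$) and that $\{0\}\notin\Sigma$ does not happen in a way that loses information — but even if the $e^{0\cdot x}$ block is absent, that only means there are no nonholomorphic $f_{j,k}^0$ to worry about at $\rho=0$, so the statement is vacuous there. One also uses \eqref{eqgg2}: $y_{\al,i}(z)\in\Q[[z]]$ so its coefficients contribute only rationals, and $g_{j,k}^\rho\in\Qbar[[z]]$ contributes only algebraic numbers, so indeed all the ``structural'' coefficients in the linear system lie in $\h$.

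\textbf{Expected main obstacle.} The delicate part is bookkeeping the triangularity precisely enough: one must verify that, after ordering by $(t_j^\rho,\text{decreasing }k)$, the relation obtained by matching a suitable coefficient really does isolate a single new $\varpi_{j,k}^\rho$ with the remaining terms involving only previously-treated constants and known $\h$-elements — in particular that no ``lower'' term secretly contributes the same power $x^{-t_j^\rho-1}\log(1/x)^i$ with an unresolved constant. This is exactly the mechanism already used successfully in the proof of Theorem~\ref{thdevG} for the single constant $\varpi_{j_0,0}^\rho=\xi$; the work here is to push that one-step extraction to a full downward induction over the finite index set $\{(j,k)\}$, keeping track that the Hadamard products $y_{t_j^\rho,m}\hada g_{j,k-m}^\rho$ evaluated at $0$ behave as expected (the $m=0$, $k=0$ term giving the nonzero leading coefficient, higher $m$ or lower-order $g$ pieces feeding only into strictly later powers of $x$). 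Everything else — that $\Gamma$-values at rationals and their derivatives are in $\h$, that $1/\Gamma(r)\in\h$ — is supplied by Proposition~\ref{prop:1}.
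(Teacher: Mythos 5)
Your overall strategy coincides with the paper's: realize $g$ as the Laplace transform of the confluent hypergeometric $E$-function $f={}_{p+1}F_{p+1}$, invoke Proposition~\ref{thhyp} to know that the coefficients of the (unique) asymptotic expansion of $f$ lie in $\h$, use Theorem~\ref{thjep} to write those coefficients as a triangular linear system in the $\varpi_{j,k}^\rho$ with structural coefficients in $\h$, and extract the constants one at a time by a double induction (over the $j$ with $t_{j_0}^\rho-t_j^\rho$ a positive integer, and by decreasing $k$), dividing at each step by a pivot which is a unit of $\h$ thanks to Proposition~\ref{prop:1}.

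There is, however, a genuine gap at exactly the point you flag as the ``expected main obstacle'', namely the case $t_j^\rho\in\N$. You assert that the pivot multiplying the new constant $\varpi_{j,k}^\rho$ is $\Gh(1-\{t_j^\rho\})\,y_{t_j^\rho,0}(0)\,g_{j,0}^\rho(0)$ and that it is nonzero. But by \eqref{eqgg2} one has $y_{\al,0}(0)=\Gamma(1-\{\al\})/\Gamma(-\al)$, and more generally $y_{\al,0}$ is \emph{identically zero} when $\al\in\N$, since $1/\Gamma(-\al-n)=0$ for every $n\ge 0$. Hence $\eta_{j,0}^\rho\equiv 0$ and your pivot vanishes: matching the coefficient of $e^{\rho x}x^{-t_j^\rho-1}\log(1/x)^{k}$ yields no information about $\varpi_{j,k}^\rho$. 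This case is not vacuous for the proposition: when $t_j^\rho\in\N$ and $k\ge 1$, the solution $f_{j,k}^\rho(z-\rho)$ is \emph{not} holomorphic at $\rho$ (because of the logarithms), so the statement does claim $\varpi_{j,k}^\rho\in\h$ there. The paper handles this by a separate second argument: one extracts instead the coefficient of $e^{\rho x}x^{-t_j^\rho-1}\log(1/x)^{k-1}$ (one power of the logarithm lower); the terms involving $\eta_{j,0}^\rho(0)$ drop out because that series is identically zero, and the surviving pivot is $\frac{1}{(k-1)!}\Gh(1)\,\eta_{j,1}^\rho(0)$ with $\eta_{j,1}^\rho(0)=y_{t_j^\rho,1}(0)\,g_{j,0}^\rho(0)=(-1)^{t_j^\rho+1}(t_j^\rho)!\,g_{j,0}^\rho(0)\in\Qbar^{\ast}$, which is again a unit of $\h$. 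Without this modification your downward induction breaks down for every logarithmic solution attached to an integer exponent, which is precisely the situation in which the $b_j$'s (or the parameter $1$) of the ${}_{p+1}F_{p+1}$ produce resonances.
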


We believe that $\varpi_{j,k}^\rho \in \h$ also when $f_{j,k}^\rho (z-\rho)$ is  holomorphic at $z=\rho$, but the proof of this result would require a new idea. However, the case where $f_{j,k}^\rho (z-\rho)$ is  holomorphic at $z=\rho$ is useless for proving Theorem \ref{thhyp3} (see Remark \ref{rem1} after Theorem \ref{thjep}).

\begin{proof} We first recall that $\widehat{\Gamma}(x):=1/\Gamma(x)$. 
We shall in fact prove Proposition \ref{connecconst} for any $G$-function $g(z)$ which is the Laplace transform of an $E$-function $f(z)$ with an asymptotic expansion of the form \eqref{eqasy1} in a  large sector bisected by any $\theta$ (except finitely many mod $2\pi$), with coefficients $c_{\rho, \alpha,i,n}\in\h$. Then Proposition \ref{thhyp} shows that the function $g(z)$ defined in \eqref{eq:defigprecise} has this property.

Let $\rho$, $j_0$ and  $k_0$ be fixed. We shall prove that $\varpi_{j_0,k_0}^\rho \in \h$; to begin with, we consider the case where $t_{j_0}^\rho\not\in\N$. By induction we may assume that for any $j$ such that  $t_{j_0}^\rho -   t_{j }^\rho $ is a positive integer and for any $k$, we have  $\varpi_{j ,k }^\rho \in \h$. The initial step of this inductive proof corresponds to the case where there is no such $j$, so that this assumption holds trivially. Let us denote by $\kappa$ the coefficient of $e^{\rho x} x^{- t_{j_0}^\rho - 1} \log(1/x) ^{k_0}$ in the asymptotic expansion of $f(x)$ in a large sector bisected by a given  $\theta$; by assumption we have $\kappa\in\h$. Now this asymptotic expansion is unique, and (except for finitely many values of $\theta$ mod $2\pi$) the following expression of $\kappa$ follows from Theorem \ref{thjep}:
$$
\kappa  = \sum_{1 \leq j \leq J(\rho) \atop t_{j_0}^\rho -   t_{j }^\rho \in\N} \sum_{k=k_0}^{K(j,\rho)} \varpi_{j,k}^\rho  
 \sum_{\ell = 0} ^{k -k_0} \frac{(-1)^{\ell}}{\ell! k_0!} 
\, \, \Gh^{(\ell)}(1-\{t_{j_0}^\rho \})\, \,  [x^{t_{j_0}^\rho -   t_{j }^\rho}](\eta_{j, k-k_0-\ell}^\rho (x)) 
$$
where $  [x^n](\eta_{j, i}^\rho (x)) $ is the coefficient of $x^n$ in the power series $\eta_{j, i}^\rho (x)\in \Qbar[[x]]$. It follows from the reflection formula $\widehat{\Gamma}(1-x)=\frac{1}{\pi}\sin(\pi x)\Gamma(x)$ that  $
 \Gh^{(\ell)}(1-\{t_{j_0}^\rho \})\in\h$ for any $\ell\in\N$, since $t_{j_0}^\rho\in\Q$. We have assumed that $ \varpi_{j,k}^\rho\in\h  $ for any pair $(j,k)$ such that $   t_{j_0}^\rho -   t_{j }^\rho$ is a positive integer, so that in the expression of $\kappa$ the total contribution of these terms belongs to $\h$. We recall that for any $j\neq j_0$ we have assumed that $t_j\neq t_{j_0}$, so that all terms with $j\neq j_0$ are included here. Proceeding by decreasing induction on $k_0$ we may also assume that $ \varpi_{j_0,k}^\rho\in\h  $  for any $k$ such that $k_0+1\leq k \leq K(j_0,\rho)$, or that there is no such $k$. Then the term corresponding to each such $k$ in the expression of $\kappa$  belongs also to $\h$. Since $\kappa\in\h$ we deduce that the only remaining term, namely the one with $(j,k)=(j_0,k_0)$, belongs to $\h$: we have $\varpi_{j_0,k_0}^\rho \lambda \in \h$ with $\lambda =  \frac1{  k_0!} 
\, \, \Gh (1-\{t_{j_0}^\rho \})\, \,  \eta_{j_0,0}^\rho (0)$. Now with the notation of \S \ref{subsecJEP} we have $ \eta_{j_0,0}^\rho (0) = y_{  t_{j_0}^\rho ,0}(0) g_{j_0,0}^\rho(0) = \frac{ \Gamma  (1-\{t_{j_0}^\rho \})}{ \Gamma  ( - t_{j_0}^\rho  )} g_{j_0,0}^\rho(0) $ so that $\lambda\neq 0$ and 
$\frac1\lambda =  \frac{  k_0!  \Gamma  ( - t_{j_0}^\rho  )}{ g_{j_0,0}^\rho(0)}\in \h$. Therefore we have proved that $\varpi_{j_0,k_0}^\rho  =\frac1\lambda \cdot \varpi_{j_0,k_0}^\rho \lambda $ belongs to $\h$.

\medskip

To conclude the proof, let us prove that $\varpi_{j_0,k_0}^\rho \in \h$ in the case where $t_{j_0}^\rho \in\N$; assuming that $f_{j_0,k_0}^\rho (z-\rho)$ is not holomorphic at $z=\rho$, we have $k_0\geq 1$ in this case. We denote   by $\kappa$ the coefficient of $e^{\rho x} x^{- t_{j_0}^\rho - 1} \log(1/x) ^{k_0-1}$ in the asymptotic expansion of $f(x)$ in a   large sector; note that the exponent of $
 \log(1/x)$ has changed with respect to the first case. Using Theorem \ref{thjep} we obtain:
$$
\kappa  = \sum_{1 \leq j \leq J(\rho) \atop t_{j_0}^\rho -   t_{j }^\rho \in\N} \sum_{k=k_0-1}^{K(j,\rho)} \varpi_{j,k}^\rho  
 \sum_{\ell = 0} ^{k -k_0+1} \frac{(-1)^{\ell}}{\ell! (k_0-1)!} 
\, \, \Gh^{(\ell)}(1-\{t_{j_0}^\rho \})\, \,  [x^{t_{j_0}^\rho -   t_{j }^\rho}](\eta_{j, k-k_0+1-\ell}^\rho (x)) .
$$
 As above we may assume  that for any $j$ such that  $t_{j_0}^\rho -   t_{j }^\rho $ is a positive integer and for any $k$, we have  $\varpi_{j ,k }^\rho \in \h$. By decreasing induction on $k$ we may also assume, in the same way, that $ \varpi_{j_0,k}^\rho\in\h  $  for any $k$ such that $k_0+1\leq k \leq K(j_0,\rho)$, or that there is no such $k$. Then as above, all terms with $j\neq j_0$ or $k\leq k_0+1$ belong to $\h$. Since $\kappa\in\h$ we deduce that
 $$
  \sum_{k=k_0-1}^{k_0} \varpi_{j_0,k}^\rho  
 \sum_{\ell = 0} ^{k -k_0+1} \frac{(-1)^{\ell}}{\ell! (k_0-1)!} 
\, \, \Gh^{(\ell)}(1-\{t_{j_0}^\rho \})\, \,   \eta_{j_0, k-k_0+1-\ell}^\rho (0)  \in\h.
$$
Now with the notation of \S\ref{subsecJEP}, $t_{j_0}^\rho \in\N$ implies that $y_{t_{j_0}^\rho,0}(x)$ is identically zero (as noticed in \cite{ateo}), so that $\eta_{j_0,0} ^\rho (x)$ is  identically zero too. Therefore the terms with $\ell = k-k_0+1$ vanish in the above sum. The only term that remains is the one with $k = k_0$ and $\ell=0$, so that we have  
 $$
   \varpi_{j_0,k_0}^\rho  
 \frac{1}{ (k_0-1)!} 
\, \, \Gh (1-\{t_{j_0}^\rho \})\, \,  \eta_{j_0,  1 }^\rho (0) \in\h.
$$
Letting $\lambda =  \frac{1}{ (k_0-1)!} 
\, \, \Gh (1-\{t_{j_0}^\rho \})\, \,  \eta_{j_0,  1 }^\rho (0) $, it is sufficient to prove that $\lambda\neq 0$ and $\frac1\lambda  \in \h$. We have $(k_0-1)! 
 \Gamma (1-\{t_{j_0}^\rho \}) \in\h$, and $ \eta_{j_0,  1 }^\rho (0) = y_{  t_{j_0}^\rho ,1}(0) g_{j_0,0}^\rho(0)$ using the fact that $ y_{  t_{j_0}^\rho ,0}(0) 
=0$ since $t_{j_0}^\rho \in\N$. We have assumed that $ g_{j_0,0}^\rho(0)$ is a non-zero algebraic integer; at last we have (see \cite[Eq. (4.6)]{ateo} for more details):
$$
 y_{  t_{j_0}^\rho ,1}(0)  =  \frac{\dd }{\dd t }\Big(\frac{\Gamma(1-\{t\})}
{\Gamma(-t )}\Big)_{| t=t_{j_0}^\rho } 
 =  \frac{\dd }{\dd t }\Big(   (-t)_{ t_{j_0}^\rho +1}
  \Big)_{| t=t_{j_0}^\rho } 
 = (-1)^{  t_{j_0}^\rho +1} (t_{j_0}^\rho )! \in \Qbar\etoile.$$
 This concludes the proof that $\lambda\neq 0$ and $\frac1\lambda  \in \h$, and that of Proposition \ref{connecconst}. \end{proof}

\subsection{Proof of Theorem \ref{thhyp3}}\label{ssec:2bis2}

We shall now transfer the result obtained in \S\ref{ssec:2bis1} to ${}_pF_q(z^{q-p+1})$ series.

Consider the hypergeometric $E$-function 
$$
F(z):={}_{p}F_q \left[ 
\begin{matrix}
a_1, \ldots, a_{p}
\\
b_1, \ldots, b_q
\end{matrix}
; z^{r}
\right]
$$
where $q\ge p\ge 1$, $r:=q-p+1\ge 1$, and $a_j\in \mathbb Q$,  $b_j\in \mathbb Q\setminus \mathbb Z_{\le 0}$. Consider also the hypergeometric $G$-function 
$$
g(z):=\frac{1}{z}\cdot{}_{q+1}F_q \left[ 
\begin{matrix}
a_1, \ldots, a_{p}, {1}/{r}, {2}/{r}, \ldots, {r}/{r}
\\
b_1, \ldots, b_q
\end{matrix}
; \frac1z
\right].
$$
The Laplace transform of $F(z)$ is 
\begin{equation*}
G(z):=\int_0^\infty F(t) e^{-zt} dt = 
\sum_{n=0}^\infty \frac{(a_1)_n\cdots (a_p)_n(rn)!}{(1)_n(b_1)_n\cdots (b_q)_n}\frac{1}{z^{rn+1}}
=
z^{r-1} g\Big(\frac{z^r}{r^r}\Big).
\end{equation*}
 The finite singularities of the (essentially hypergeometric) differential equation satisfied by $g(z)$ are 0 and 1. As in the previous sections
  (see \S \ref{subsecJEP}),  we connect $g(z)$ on a local  basis $(f_{j,k}^\rho(z-\rho))_{j,k}$ around $z=\rho \in \{0,1\}$: 
$$
g(z)=\sum_{j=1}^{J(\rho)}\sum_{k=0}^{K(j,\rho)} \varpi_{j,k}^\rho f_{j,k}^\rho(z-\rho)
$$
where
$$
f_{j,k}^\rho(z-\rho)=(z-\rho)^{t_j^\rho}\sum_{\ell=0}^k g_{j,k-\ell}^\rho (z-\rho)\frac{\log(z-\rho)^{\ell}}{\ell!}
$$
with $t_j^\rho\in  \mathbb Q$ and $g_{j,k-\ell}^\rho (z-\rho)\in \Qbar[[z-\rho]]$ holomorphic at $z=\rho$. Here, $\theta$ is chosen such that $\log(z)$ is defined with $-\pi-\theta<\arg(z)<\pi-\theta$ and $\log(z-1)$ is defined with $-\pi-\theta< \arg(z-1)<\pi-\theta$. For later use, we also impose that $\theta$ is such that the following property holds: the half-lines $L_0:=-e^{-i\theta}\mathbb R_+$ and $L_k:=re^{2i\pi k/r}-e^{-i\theta}\mathbb R_+$ ($k=1,2,\ldots, r$) are such that $L_j\cap L_k =\emptyset$ for $j\neq k$. 

We may assume that for any $\rho$ the rational numbers $t_1^\rho$, \ldots, $t_{J(\rho)}^\rho$ are pairwise distinct, and that $g_{j,0}^\rho (0)\neq 0$ for any $\rho$ and any $j$. Then Proposition~\ref{connecconst} proved in \S \ref{ssec:2bis1} shows that $ \varpi_{j,k}^\rho \in \h$ for any $j$, $k$, $\rho$, except maybe when $ f_{j,k}^\rho(z-\rho)$ is holomorphic at $z=\rho$ (i.e., when $k=0$ and $t_j^\rho\in\N$).

\medskip

The set of finite singularities of the differential equation $\mathcal{E}$ satisfied by $G(z)$ is 
$$
\{0,e^{2i\pi/r}r, e^{4i\pi/r}r, \ldots,  e^{2r i\pi/r}r \}.
$$
The function $G(z)$ can be continued to the simply connected cut plane $\mathcal{L}:=\mathbb C\setminus \cup_{k=0}^r L_k$. Since the minimal differential equations satisfied by $G(z)$ and $g(z)$ have the same order, 
$(z^{r-1}f_{j,k}^0(z^r/r^r))_{j,k}$ is a basis of solutions of $\mathcal{E}$ at $z=0$ while $(z^{r-1}f_{j,k}^1(z^r/r^r-1))_{j,k}$ is a basis of solutions of $\mathcal{E}$ at any one of the points $z=re^{2 i \ell \pi/r}$, $\ell =1, \ldots, r$. These bases are not necessarily  of the form \eqref{eqdeffjk}  but this is not essential  for our purpose. 

\medskip

Let us first consider the connection of $G(z)$ with the basis of solutions at $z=0$. Below, $\log(z)$ is defined  $-\pi-\theta<\arg(z)<\pi-\theta$, and we have $\log(z^r/r^r)=r\log(z)-r\log(r)$ for all $z\in \mathbb C$ such that $c_1(r)<\arg(z)<c_2(r)$, for some well chosen constants $c_1(r)<c_2(r)$. Note that $r\log(r)\in \h$. We then have, for all $z\in \mathcal L$ such that $c_1(r)<\arg(z)<c_2(r)$,
\begin{align*}
G(z)&=\sum_{j=1}^{J(0)}\sum_{k=0}^{K(j,0)} \varpi_{j,k}^0  z^{r-1}f_{j,k}^0(z^r/r^r) 
\\
&= \sum_{j=1}^{J(0)}\sum_{k=0}^{K(j,0)} \varpi_{j,k}^0 r^{-rt_j^0} z^{rt_j^0+r-1} \sum_{\ell=0}^k g_{j,k-\ell}^0(z^r/r^r)\frac{\log(z^r/r^r)^\ell}{\ell!} 
\\
&=\sum_{j=1}^{J(0)}\sum_{k=0}^{K(j,0)} \varpi_{j,k}^0 r^{-rt_j^0} z^{rt_j^0+r-1} \sum_{\ell=0}^k g_{j,k-\ell}^0(z^r/r^r)
S_\ell(\log(z)), 
\end{align*}
where $S_\ell[x]\in \h[x]$ is of degree $\ell$. 
Note that $r^{-rt_j^0}\in \Qbar$ and $g_{j,k-\ell}^0(z^r/r^r)\in \Qbar[[z]]$.  
Recall that, by Proposition \ref{connecconst}, we have $\varpi_{j,k}^0 \in \h$ except maybe when $k=0$ and $t_j^0\in\N$. We observe that in this special case,   $\varpi_{j,0}^0 $ appears in the formula above as the coefficient of a function holomorphic at 0, because  $t_j^0\in\N$ implies $rt_j^0+r-1\in\N$ and $g_{j,0}^0(z^r/r^r)$ is holomorphic at~$0$.  Hence, for all $z\in \mathcal{L}$ such that $c_1(r)<\arg(z)<c_2(r)$, we have  
\begin{equation}\label{eq:G0}
G(z)=\sum_{j=1}^{J(0)}\sum_{k=0}^{K(j,0)} \Omega_{j,k}^0  z^{rt_j^0+r-1} \sum_{\ell=0}^k G_{j,k-\ell}^0(z)\frac{\log(z)^\ell}{\ell!}
\end{equation}
with ${\Omega}_{j,k}^0\in \C$,   $G_{j,k-\ell}^0(z)\in \h[[z]]$ are holomorphic at $z=0$, and 
$  \Omega_{j,k}^0 \in \h$ for any pair $(j,k)$ such that $k\geq 1$ or $rt_j^0+r-1\not\in\N$. Now, Eq. \eqref{eq:G0}  extends to $\mathcal{L}$ by analytic contination, ie the assumption $c_1(r)<\arg(z)<c_2(r)$ can be dropped. 
  For our application, it is enough to know that the functions $G_{j,k-\ell}^0(z)$ are in $\h[[z]]$ and not necessarily $G$-functions. Moreover, we need no information on the nature of the constants $\Omega_{j,0}^0$ for $j$ such that $rt_j^0+r-1 \in\N$
because they are factors of terms in \eqref{eq:G0} that are holomorphic at 0, and therefore do not contribute to the asymptotic expansion of $F(z)$. Note that it may happen that some $t_{j_0}\notin \mathbb N$ is such that $rt_{j_0}^0+r-1\in \mathbb N$, in which case  we know that ${\Omega}_{j_0,0}^0$ is indeed in $\h$ but this information will not be useful to complete the proof of Theorem \ref{thhyp3}.

\medskip

Let us now consider the connection of $G(z)$ with the basis of solutions at $z=e^{2 \ell i\pi/r}r$. For simplicity, we shall assume that $\ell =0$ but the general case can be obtained similarily. Below, $\log(z -r)$ is defined with $-\pi-\theta< \arg(z-r)<\pi-\theta$. We have $\log(z^r/r^r-1)-\log(z-r)= 
Q(z-r)\in \h[[z-r]]$  for $z\in \mathbb C$ such that $\vert z-r\vert<\kappa$ and  $d_1(r)<\arg(z)<d_2(r)$, for some well chosen constants $\kappa$ and $d_1(r)<d_2(r)$. (The series $Q(z-r)$ may change if the angular sector $d_1(r)<\arg(z)<d_2(r)$ is changed to another one.) Hence, for $z\in \mathcal{L}$ such that $\vert z-r\vert<\kappa$ and  $d_1(r)<\arg(z)<d_2(r)$, we have
\begin{align*}
G(z)&=\sum_{j=1}^{J(1)}\sum_{k=0}^{K(j,1)} \varpi_{j,k}^1  z^{r-1}f_{j,k}^1(z^r/r^r) 
\\
&= \sum_{j=1}^{J(1)}\sum_{k=0}^{K(j,1)} \varpi_{j,k}^1  (z^r/r^r-1)^{t_j^1}z^{r-1} \sum_{\ell=0}^k g_{j,k-\ell}^1(z^r/r^r-1)\frac{\log(z^r/r^r-1)^\ell}{\ell!}  
\\
&=\sum_{j=1}^{J(1)}\sum_{k=0}^{K(j,1)} \varpi_{j,k}^1  (z-r)^{t_j^1}P_j(z-r) \sum_{\ell=0}^k \frac{1}{\ell!} g_{j,k-\ell}^1(z^r/r^r-1)
\big(\log(z-r) +Q(z-r)\big)^\ell 
\end{align*}
where $P_j(z-r)\in \Qbar[[z-r]]$. 
As above,  Proposition \ref{connecconst} yields   $\varpi_{j,k}^1 \in \h$ for any $j$, $k$, except maybe when $k=0$ and $t_j^1\in\N$.  Hence,  for all $z\in \mathcal L$ such that $\vert z-r\vert<\kappa$ and  $d_1(r)<\arg(z)<d_2(r)$, we have 
\begin{equation}\label{eq:Gr}
G(z)=\sum_{j=1}^{J(1)}\sum_{k=0}^{K(j,1)} {\Omega}_{j,k}^r \cdot (z-r)^{t_j^1} \sum_{\ell=0}^k G_{j,k-\ell}^r(z-r)\frac{\log(z-r)^\ell}{\ell!}
\end{equation}
with ${\Omega}_{j,k}^r\in \C$,  $G_{j,k-\ell}^r(z-r)\in \h[[z-r]]$ are holomorphic at $z=r$, and   ${\Omega}_{j,k}^r\in \h$ for any $j$, $k$ such that $k\geq 1$ or $t_j ^1 \not\in \N$. Now, Eq. \eqref{eq:Gr} extends to $\mathcal{L}$ by analytic continuation, ie the conditions  that $\vert z-r\vert<\kappa$ and  $d_1(r)<\arg(z)<d_2(r)$ can be dropped. For our application, it is enough to know that $G_{j,k-\ell}^r(z)$ are in $\h[[z]]$ and not necessarily $G$-functions. Moreover, we need no information on the nature of the constants $\Omega_{j,0}^r$ for $j$ such that $t_j ^1 \in \N$ 
because they are factors of terms in \eqref{eq:G0} that  are holomorphic at $z=r$, and therefore do
not contribute to the asymptotic expansion of $F(z)$  (using Remark \ref{rem1} stated after  Theorem~\ref{thjep}). Eq.~\eqref{eq:Gr} can be generalized to the other singularities $e^{2 i \ell \pi/r}r$ with obvious changes.

\medskip

Now, we use the analytic contination to $\mathcal L$ of Eqs. \eqref{eq:G0},  \eqref{eq:Gr}  and  analogues at $z=e^{2 \ell i\pi/r}r$ for other values of $\ell$ in the formula 
of Theorem~\ref{thjep}, and we deduce Theorem \ref{thhyp3}.

\section{Application to Siegel's problem: proof of  Theorem~\ref{thimplication}} \label{sec:fin}

We now complete the proof of Theorem \ref{thimplication}. Assume that Siegel's question has an affirmative answer, and let $\xi\in\GG$. Theorem \ref{thdevG} 
provides
an $E$-function $E(z)$ and a finite set $S\subset (-\pi,\pi)$ such that for any $\theta\in(-\pi, \pi)\setminus S$, $\xi$ is a coefficient of the asymptotic expansion of $E(z)$ in a large sector bisected by $\theta$. Now an affirmative answer to Siegel's question yields $n$  hypergeometric series $f_1,\ldots, f_n$ of the type ${}_p F_q(z^{q-p+1})$ (for various values of $q-p$, $p$ and $q$ such that $q\ge p\ge 1$) with rational parameters, $n$ algebraic numbers $\lambda_1,\ldots,\lambda_n$, and a polynomial $P\in\Qbar[X_1,\ldots,X_n]$, such that $E(z)=P(f_1(\lambda_1 z),\ldots, f_n(\lambda_n z))$. Choose $\theta\in(-\pi, \pi) $ outside a suitable finite set. Then Theorem \ref{thhyp3} implies that for any~$i$, the asymptotic expansion of $f_i(\lambda_i z)$ in a large sector bisected by $\theta$ has coefficients in $\h$. The same holds for $E(z)=P(f_1(\lambda_1 z),\ldots, f_n(\lambda_n z))$ because $\h$ is a $\Qbar$-algebra. Since such an asymptotic expansion is unique (see \S\ref{subsecasyexp}), the coefficient $\xi$ belongs to $\h$. This concludes the proof.

\section{A Siegel type problem for $G$-functions}\label{sec:siegelprobG}

We recall that  $\sum_{n=0}^\infty a_n z^n$ is a $G$-function if $\sum_{n=0}^\infty a_n z^n/n!$ is an $E$-function (in the sense of this paper). $G$-functions form a ring stable under $\frac{d}{dz}$ and $\int_0^z$; they are not entire in general, they have a finite number of singularities and they can be analytically continued in a cut plane with cuts at these singularities. Moreover, given any algebraic function $\alpha(z)$ over $\Qbar(z)$ holomorphic at 0 and any $G$-function $f(z)$, the functions $\alpha(z)$ and $f(z\alpha(z))$ are $G$-functions.

For any integer $p\ge 0$, the  hypergeometric series
\begin{equation}
\label{eq:Gfnhyp}
{}_{p+1}F_p \left[ 
\begin{matrix}
a_1, \ldots, a_{p+1}
\\
b_1, \ldots, b_p
\end{matrix}
; z
\right] := \sum_{n=0}^\infty \frac{(a_1)_n\cdots (a_{p+1})_n}{(1)_n(b_1)_n\cdots (b_p)_n} z^n
\end{equation}
is a $G$-function when $a_j\in \mathbb Q$ and $b_j\in \mathbb Q\setminus \mathbb Z_{\le 0}$ for all $j$; Galochkin's classification can be adapted to describe all the hypergeometric $G$-functions of type ${}_{p+1}F_p$. The simplest examples are ${}_1F_0[a; \cdot;z]=(1-z)^a$ ($a\in \mathbb Q$) and ${}_2F_1[1,1; 2; z]=-\log(1-z)/z$. If $a_j\in \mathbb Z_{\le 0}$ for some $j$, then the series reduces to a polynomial. Any polynomial with coefficients in $\Qbar$ of  functions of the form $\mu(z)\, {}_{p+1}F_p[a_1, \ldots, a_{p+1}; b_1,\ldots, b_p ;\lambda(z)]$ is a $G$-function, where the parameters
$a_j, b_j \in \mathbb Q$, and $\mu(z), \lambda(z)$ are algebraic over $\Qbar(z)$ and holomorphic at  $z=0$, with $\lambda(0)=0$. 

In the spirit of Siegel's problem for $E$-functions, it is natural to ask  the following question.
\begin{questno*}\label{questionG}
Is it possible to write any $G$-function as a polynomial with coefficients in $\Qbar$ of functions of the form $\mu(z)\cdot{}_{p+1}F_p[a_1, \ldots, a_{p+1}; b_1,\ldots, b_p ;\lambda(z)]$, with $p \ge 0$, $a_j, b_j \in \mathbb Q$, $\mu(z), \lambda(z)$ algebraic over $\Qbar(z)$ and holomorphic at $z=0$, and $\lambda(0)=0$?
\end{questno*}
We prove in this section a result similar to that for $E$-functions (recall that the inclusion ${\bf G} \subset \h$ is very unlikely: see \S \ref{subsecH}).
\begin{theo} \label{thimplication2}
At least one of the following statements is true:
\begin{enumerate}
\item[$(i)$] ${\bf G} \subset \h$; 
\item[$(ii)$] Question \ref{questionG} has a negative answer under the further assumption that the algebraic functions $\lambda$ have a common singularity in $\Qbar^* \cup \{\infty\}$ at  which they all tend  to $\infty$.
\end{enumerate}
\end{theo}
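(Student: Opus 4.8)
The plan is to mimic the proof of Theorem~\ref{thimplication} given in \S\ref{sec:fin}, replacing the asymptotic expansion at infinity of $E$-functions by the local behaviour of $G$-functions at their singularities. First I would invoke Theorem~\ref{thdevG} (or rather its proof): given $\xi\in\GG$, by \cite[Theorem~1]{gvalues} there is a $G$-function $h(z)$ holomorphic at $z=1$ with $h(1)=\xi$, and it suffices to work with $g(z):=h(1/z)/(z(z-1))$ or, more directly, to observe that $\xi$ appears as a connection constant of $h$ at its singularity $z=1$ — precisely, $\xi$ is the coefficient of the ``constant'' solution in the local expansion of $h$ around $z=1$ (the exponent $t=0$, no logarithm). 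The key point is that the connection constants of a $G$-function at an algebraic singularity, read off from a local Frobenius basis as in \eqref{eqdeffjk}--\eqref{gconnectbaseack}, lie in $\GG$ by \cite[Theorem~2]{gvalues}, and conversely every element of $\GG$ arises this way.

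Next, assuming Question~\ref{questionG} has an affirmative answer under the stated restriction on the $\lambda$'s, I would write $h(z)=P\big(\mu_1(z)\,{}_{p_1+1}F_{p_1}(\lambda_1(z)),\ldots,\mu_n(z)\,{}_{p_n+1}F_{p_n}(\lambda_n(z))\big)$ with $P\in\Qbar[X_1,\ldots,X_n]$, the $\mu_i,\lambda_i$ algebraic over $\Qbar(z)$ and holomorphic at $0$, $\lambda_i(0)=0$, and — by the further hypothesis in $(ii)$ — the $\lambda_i$ sharing a singularity $\zeta\in\Qbar^*\cup\{\infty\}$ where they all tend to $\infty$. I would then analyse the local behaviour of each building block $\mu_i(z)\,{}_{p_i+1}F_{p_i}(\lambda_i(z))$ near a point lying above $\zeta$ (or near $\zeta$ itself). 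Since $\lambda_i(z)\to\infty$ there, the relevant expansion of ${}_{p_i+1}F_{p_i}(w)$ is its expansion as $w\to\infty$; such an expansion around the singularity $w=1$ of ${}_{p+1}F_p$, combined with the connection formulae, expresses ${}_{p+1}F_p$ near $w=\infty$ with connection constants that must be controlled. The crucial input is that these connection constants of the hypergeometric $G$-function ${}_{p+1}F_p$ lie in $\h$: this is exactly what Proposition~\ref{connecconst} (applied to $g(z)=\frac1z{}_{p+1}F_p(1/z)$, whose only finite singularities are $0$ and $1$) delivers for the non-holomorphic solutions, and the holomorphic ones do not matter since their connection constants multiply functions that contribute nothing to the relevant piece of the local expansion (Remark~\ref{rem1}).

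Then I would transfer: composing with the algebraic function $\lambda_i(z)$ and multiplying by the algebraic $\mu_i(z)$ changes the local exponents and multiplies the local Frobenius data by algebraic power series, exactly as in the ramification computation of \S\ref{ssec:2bis2} where $z\mapsto z^r/r^r$ was used; the ``$\log$ of algebraic'' contributions produce factors like $\log(q)$ and $\Gamma$-related constants that are already in $\h$ (via Proposition~\ref{prop:1}), and $\h$ is a $\Qbar$-algebra stable under the operations involved. Hence the local expansion of $h(z)=P(\cdots)$ at a point above $\zeta$ has all its connection constants in $\h$. Pulling $h$ back — by a further algebraic change of variable sending $z=1$ (where we read off $\xi$) to a point above $\zeta$, which is legitimate since Question~\ref{questionG}'s class is stable under $f(z)\mapsto f(z\alpha(z))$ with $\alpha$ algebraic, holomorphic, and one can arrange the singularity structure — one concludes $\xi\in\h$. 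By uniqueness of such local expansions this forces $\GG\subset\h$, i.e.\ $(i)$ holds; so one of $(i)$, $(ii)$ is true.

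The main obstacle I anticipate is the bookkeeping that connects the abstract statement ``$\xi$ is a connection constant of some $G$-function at an algebraic singularity'' to ``$\xi$ is a connection constant at a singularity \emph{lying above the common point $\zeta$ where all the $\lambda_i$ blow up}''. One must produce, for a given $\xi$, a $G$-function realising $\xi$ \emph{and} whose chosen singularity can be moved by an admissible algebraic substitution to sit above $\zeta$; equivalently one must show the class in Question~\ref{questionG} is rich enough, after the restriction on $\lambda$, to still capture all of $\GG$. This is why the hypothesis ``the $\lambda$'s have a common singularity in $\Qbar^*\cup\{\infty\}$ at which they all tend to $\infty$'' is needed: it guarantees that the only local analysis one ever has to perform on ${}_{p+1}F_p(\lambda(z))$ is the behaviour of ${}_{p+1}F_p$ near $\infty$, which is precisely the regime covered by Proposition~\ref{connecconst}. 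Absent that hypothesis one would also need the values of ${}_{p+1}F_p$ and of the connection constants at $w=1$ itself, which are genuine $G$-values and not known to lie in $\h$ — indeed that is the whole point of the theorem. A secondary technical nuisance is handling ramified local exponents and possible integer-exponent resonances (as in \S\ref{ssec:2bis2}, where some $t_{j_0}\notin\N$ can nonetheless give $rt_{j_0}+r-1\in\N$), but the treatment there applies \emph{mutatis mutandis}.
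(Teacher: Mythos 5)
Your overall strategy matches the paper's: realise $\xi\in\GG$ as the coefficient of the constant term in the local expansion, at the common singularity $\zeta$, of a $G$-function representing it (the ``moving of the singularity'' that worries you is packaged in the paper as Theorem~\ref{theo:new}, proved exactly by the M\"obius substitution $z\mapsto \frac{\alpha z}{z-\alpha}$ you have in mind); then show that every polynomial in functions $\mu_i(z)\,{}_{p_i+1}F_{p_i}(\lambda_i(z))$ has a local expansion at $\zeta$ with all coefficients in $\h$; and conclude by uniqueness of such expansions. You also correctly identify why the extra hypothesis on the $\lambda$'s is needed.

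There is, however, a genuine gap in your key analytic input. You control the local data of ${}_{p+1}F_p(w)$ at $w=\infty$ via Proposition~\ref{connecconst} and dismiss the basis elements that are holomorphic at the singularity by appealing to Remark~\ref{rem1}. That remark is specific to \emph{asymptotic expansions of $E$-functions}: there, a local solution that is holomorphic at $\rho$ contributes nothing to the Stokes data, which is why Proposition~\ref{connecconst} could afford to exclude that case (the paper explicitly says that covering it ``would require a new idea''). In the $G$-function setting the situation is reversed: the quantity you must extract, $c_{0,0,0}=\xi$, is the coefficient of the exponent-$0$, log-free term, i.e.\ it lives precisely in the holomorphic part of the local expansion at $\zeta$ (or $\infty$), and the holomorphic parts of the building blocks $\mu_i\,{}_{p_i+1}F_{p_i}(\lambda_i)$ do contribute to it through the polynomial $P$ --- this happens as soon as some upper parameter $a_j$ is a positive integer, as for ${}_2F_1[1,1;2;w]=-\log(1-w)/w$. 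So Proposition~\ref{connecconst} does not deliver what you need here. The paper circumvents this by proving Theorem~\ref{thhyp2} directly: the Mellin--Barnes residue computation of \S\ref{ssec:analyticont} yields the \emph{entire} analytic continuation of ${}_{p+1}F_p$ in $|z|>1$, holomorphic part included, with all coefficients in $\h$. Replacing your use of Proposition~\ref{connecconst} by Theorem~\ref{thhyp2} repairs the argument; the rest of your outline (Puiseux expansions of $\mu,\lambda$ at $\zeta$, stability of $\h$ under the resulting operations, monodromy contributing only elements of $\Qbar[\pi]\subset\h$) is essentially as in the paper.
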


Our method seems inoperant if this further assumption on the $\lambda$'s is dropped. 
This problem is related to a con\-jec\-ture of Dwork \cite[p. 784]{dworkajm} concerning the classification of certain operators in $\Qbar(z)[\frac{d}{dz}]$ of order~2, which was disproved by Krammer \cite{krammer}; later on, Bouw-M\"oller \cite{bouw} gave other counter-examples, of a different nature.  Dwork's conjecture said that a globally nilpotent operator of order 2 either has a basis of algebraic solutions over $\Qbar(z)$ or is an algebraic pullback  of the hypergeometric equation for the ${}_2F_1$ with rational parameters. We will not define here globally nilpotent operators (see the references), but they are conjectured to coincide with $G$-operators, i.e.~operators in $\Qbar(z)[\frac{d}{dz}] \setminus \{0\}$ which are minimal for some non-zero $G$-function. It is known that operators ``coming from geometry" are $G$-operators and globally nilpotent, by results of Andr\'e~\cite[p.~111]{andreGfns} and Katz~\cite[Theorem 10.0]{katz} respectively. The Krammer and Bouw-M\"oller operators  ``come from geometry'' and in \cite[\S9]{bouw}, the authors even produced explicit $G$-functions solutions of their operators which are neither algebraic functions nor algebraic pullbacks of a ${}_2F_1$ with rational parameters. However, this does not rule out the  possibility that these $G$-functions could be polynomials in more variables in ${}_{p+1}F_p$ hypergeometric functions with various values of $p\ge 1$.

Finally, if there exist $\alpha\in \Qbar$, $\vert \alpha\vert<1$, and $s\in \mathbb N$ such that $\li_s(\alpha)\notin \h$, then the proof given below shows that $\li_s(\frac{\alpha z}{z-\alpha})$ provides a counter-example, of differential order~$s+1$, to Question~\ref{questionG} with the  restriction in Theorem~\ref{thimplication2}.

\subsection{$G$-values as connection constants of $G$-functions}  \label{ssec:Gconstconnec}
Given a non-zero $G$-function $f(z)$, let $L$ denote a non-zero   operator in $\Qbar(z)[\frac{d}{dz}]$ such that $Lf(z)=0$ and of minimal order for $f$. By standard results of Andr\'e, Chudnovsky and Katz recalled in \cite[\S3]{YA1} or \cite[\S4.1]{gvalues} (and already used in \S\ref{subsecJEP}), $L$ is fuchsian with rational exponents and, at any $\alpha\in \Qbar \cup\{\infty\}$, $L$ admits a $\mathbb C$-basis of solutions of the form
$$
F (z-\alpha) :=  \sum_{e\in E}\sum_{k\in K} (z-\alpha)^e \log(z-\alpha)^k f_{e,k}(z-\alpha)
$$
where $E\subset \mathbb Q$, $K\subset \mathbb N$ are finite sets, and the $f_{e,k}(z)$ are $G$-functions; if $\alpha=\infty$, $z-\alpha$ has to be replaced by $1/z$.  We call such  a basis an ACK basis of $L$ at $\alpha$. The determination of $\log(z-\alpha)$ is fixed but somewhat irrelevant to our purpose; 
 monodromy around  the singularities and $\infty$ of any solution  of $L$ produces further coefficients in $\Qbar[\pi]\subset {\bf G}$ for the local expansions of this solution at these points.

Given an element $F(z-\beta)$ of an ACK basis of $L$ at some point $\beta\in \Qbar \cup\{\infty\}$ and an ACK basis $F_1(z-\alpha), \ldots, F_{\mu}(z-\alpha)$ of $L$ at $\alpha\in \Qbar \cup\{\infty\}$, we can connect locally around $\alpha$ an analytic continuation of $F(z-\beta)$  (in a suitable cut  plane) to this ACK basis:
\begin{equation}\label{eqconnec}
F(z-\beta)=\sum_{j=1}^\mu \omega_j F_j(z-\alpha),
\end{equation}
where, following a general terminology, the complex numbers $\omega_1,\ldots, \omega_\mu$ are {\em connection constants}. In \cite[Theorem 2]{gvalues}, we proved that $\omega_1,\ldots, \omega_\mu$ are in fact in ${\bf G}$. We prove here a converse result.

\begin{theo}\label{theo:new} Let $\xi\in {\bf G}\setminus\{0\}$ and $\alpha\in \Qbar^* \cup\{\infty\}$. There exists a non-zero $G$-function $F(z)$ solution of $L\in \Qbar(z)[\frac{d}{dz}]\setminus\{0\}$, of minimal order for $F$, and  an ACK basis $F_1(z-\alpha), \ldots, F_{\mu}(z-\alpha)$ of $L$  at $\alpha$ such that the analytic continuation of $F(z)$ in a suitable cut plane is given by 
$$
\sum_{j=1}^\mu \omega_j F_j(z-\alpha),
$$
where $\omega_1=F(\alpha)=\xi$.
\end{theo}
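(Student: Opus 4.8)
The plan is to mirror the construction in the proof of Theorem~\ref{thdevG}, but now working directly with $G$-functions rather than passing to the associated $E$-function. The starting point is \cite[Theorem 1]{gvalues}: given $\xi\in{\bf G}\setminus\{0\}$, there is a $G$-function $h(z)$, holomorphic at $z=1$, with $h(1)=\xi$. We then want to produce from $h$ a $G$-function whose analytic continuation near the prescribed point $\alpha$ expands in an ACK basis with a connection constant equal to $\xi$, and such that the value of the function at $\alpha$ is $\xi$ as well. A natural candidate is obtained by composing $h$ with an algebraic change of variable sending $\alpha$ to $1$ (or to $\infty$, in the case $\alpha=\infty$): for $\alpha\in\Qbar^*$, set $F(z):=h\big(\varphi(z)\big)$ where $\varphi$ is an algebraic function over $\Qbar(z)$, holomorphic at $z=\alpha$, with $\varphi(\alpha)=1$ and $\varphi'(\alpha)\neq 0$ --- for instance a suitable Möbius transformation such as $\varphi(z)=z/\alpha$, so that $F(z)=h(z/\alpha)$. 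Since $G$-functions are stable under composition with algebraic functions holomorphic at the relevant point (as recalled at the beginning of \S\ref{sec:siegelprobG}), $F$ is again a $G$-function, and $F(\alpha)=h(1)=\xi$. The case $\alpha=\infty$ is handled similarly by taking $\varphi(z)=1/z$, after noting $h(1/z)$ is a $G$-function in $1/z$; one must be slightly careful that $F$ is then a $G$-function as a function of $1/z$, which is exactly what is needed for expansions at $\infty$.

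Next I would let $L$ be a non-zero operator in $\Qbar(z)[\frac{d}{dz}]$ of minimal order annihilating $F$; by Andr\'e--Chudnovsky--Katz it is fuchsian with rational exponents and admits an ACK basis $F_1(z-\alpha),\ldots,F_\mu(z-\alpha)$ at $\alpha$, as recalled in \S\ref{ssec:Gconstconnec}. Connecting the analytic continuation of $F$ to this basis in a suitable cut plane gives $F(z)=\sum_{j=1}^\mu\omega_j F_j(z-\alpha)$ with all $\omega_j\in{\bf G}$ by \cite[Theorem 2]{gvalues}. It remains to arrange the bookkeeping so that one distinguished connection constant equals $\xi$ and coincides with $F(\alpha)$. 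The key local observation is that $F$ is holomorphic at $\alpha$ with $F(\alpha)=\xi\neq 0$; hence in the ACK basis at $\alpha$ the leading behaviour of $F$ as $z\to\alpha$ is the constant $\xi$. By the same normalization trick used in the proof of Theorem~\ref{thdevG} --- shifting exponents by integers via Frobenius' method, taking $\Qbar$-linear combinations of basis elements so that the exponents $e$ (together with their log-powers) have pairwise distinct asymptotic behaviours, and rescaling so the relevant $f_{e,k}(0)$ equals $1$ --- one can single out the basis element $F_1(z-\alpha)$ that is holomorphic and nonvanishing at $\alpha$ (exponent $e=0$, no logarithm), normalized so that $F_1(\alpha)=1$. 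Then comparing the two sides of the connection formula at $z=\alpha$ forces $\omega_1=F(\alpha)=\xi$: every other $F_j(z-\alpha)$ either vanishes at $\alpha$ (exponent $>0$, or a positive power of $\log$, or exponent in $\N$ with vanishing leading term) so contributes nothing to the value at $\alpha$. This yields the statement with $\omega_1=F(\alpha)=\xi$.

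The minor points that need care, and where I expect the only real friction, are: (1) checking that the particular $G$-function $h$ supplied by \cite[Theorem 1]{gvalues} is \emph{not} annihilated by an operator whose ACK basis at $\alpha$ is degenerate in a way that prevents the normalization --- but this is handled exactly as in Theorem~\ref{thdevG}, since holomorphy of $F$ at $\alpha$ together with $F(\alpha)\neq 0$ guarantees that the exponent-$0$, no-log solution occurs with nonzero coefficient; (2) the requirement that $L$ be \emph{minimal} for $F$: one takes $L$ to be a minimal-order annihilator of $F$ from the outset, and minimality is what licenses the invocation of \cite[Theorem 2]{gvalues} for the membership $\omega_j\in{\bf G}$, so there is nothing extra to prove here; (3) the case $\alpha=\infty$, where ``holomorphic at $\alpha$'' means holomorphic as a function of $1/z$ near $0$ with a nonzero value, and the same argument goes through with $z-\alpha$ replaced by $1/z$ throughout. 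In short, the proof is a direct transcription of the method behind Theorem~\ref{thdevG} into the ``$G$-function / connection constant'' language of \S\ref{ssec:Gconstconnec}, and no genuinely new idea is required; the main (modest) obstacle is simply verifying the local normalization so that the distinguished connection constant coincides simultaneously with $\xi$ and with the value $F(\alpha)$.
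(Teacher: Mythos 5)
For finite $\alpha$ your argument is essentially the paper's: you produce a $G$-function taking the value $\xi$ at $\alpha$ (you via $F(z)=h(z/\alpha)$ with $h(1)=\xi$, the paper by invoking the version of \cite[Theorem 1]{gvalues} giving a $G$-function of radius of convergence $>|\alpha|$ with $G(\alpha)=\xi$ directly --- the two are interchangeable), take a minimal operator and an ACK basis at $\alpha$, renormalize the basis so that the dominant element appearing with non-zero coefficient is the exponent-$0$, log-free one with value $1$ at $\alpha$, and read off $\omega_1=F(\alpha)=\xi$. The one point you state loosely --- that holomorphy of $F$ at $\alpha$ with $F(\alpha)\neq 0$ forces the dominant basis element to have a non-zero \emph{algebraic} value at $\alpha$ (so that it can be rescaled to $1$ while remaining an ACK basis) --- is exactly what the paper extracts from \cite[Lemma 5]{gvalues}, and your phrasing covers it.

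The case $\alpha=\infty$, however, is genuinely wrong as written, and it is the case actually needed in the application to Theorem \ref{thimplication2}. Setting $F(z)=h(1/z)$ fails on two counts. First, $h(1/z)$ is not a $G$-function in the sense of the paper (a power series in $\Qbar[[z]]$ at the origin): the substitution $z\mapsto 1/z$ does not fix $0$, so you lose the very object the theorem asks for. Second, even ignoring that, $1/z\to 0$ as $z\to\infty$, so $h(1/z)\to h(0)$, which is the constant term of a $G$-function and hence an \emph{algebraic} number --- not $\xi$, which is transcendental in the interesting cases. You cannot arrange $h(0)=\xi$ for the same reason. The correct device, used in the paper, is a M\"obius involution that fixes $0$ and sends $\infty$ to a finite algebraic point: fix $\alpha\in\Qbar^*$, take $G$ with $G(\alpha)=\xi$ as in the finite case, and set $H(z):=G\bigl(\frac{\alpha z}{z-\alpha}\bigr)$. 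Then $H$ is a genuine $G$-function (the substituted algebraic function is holomorphic at $0$ and vanishes there), $H(z)\to G(\alpha)=\xi$ as $z\to\infty$, and the ACK basis of its minimal operator at $\infty$ is obtained by transporting the ACK basis of $L$ at $\alpha$ via $H_j(1/z)=F_j\bigl(\frac{\alpha z}{z-\alpha}-\alpha\bigr)$, so that the connection constants, including $\omega_1=\xi$, are unchanged. With this replacement your proof matches the paper's.
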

\begin{proof} Let $\xi\in {\bf G}\setminus\{0\}$. We first assume that $\alpha\neq \infty$.  
By \cite[Theorem 1]{gvalues}, there exists a non-zero $G$-function $G(z)$  of radius of convergence $>\vert \alpha\vert$ such that $G(\alpha)=\xi$. Let $L\in \Qbar(z)[\frac{d}{dz}]\setminus\{0\}$ be of minimal order for $G$.  Let $F_1(z-\alpha), \ldots, F_{\mu}(z-\alpha)$ be an ACK basis of $L$ at $\alpha$. Up to relabeling the basis, we can  assume without loss of generality that there exists $\lambda\le \mu$ such that
\begin{equation}\label{eq:glfj}
G(z)=\sum_{j=1}^\lambda \omega_j F_j(z-\alpha)
\end{equation}
in a cut plane locally around $z=\alpha$, 
where the $\omega_j\in {\bf G}$ are all non-zero. Up to performing $\Qbar$-linear combinations  of the $F_j$'s, we can assume without loss of generality that for all $j\ge2$, $F_j(z-\alpha)=o(F_1(z-\alpha))$ locally around $z=\alpha$. (Doing so, $(F_1(z-\alpha), \ldots, F_{\mu}(z-\alpha))$ remains an ACK basis of $L$ at $\alpha$.)
Hence, 
$G(z)\sim \omega_1 F_1(z-\alpha)$ as $z\to  \alpha$ in the cut plane. Because $G(\alpha)=\xi\neq 0$ and $\omega_1\neq 0$, this implies that $\kappa:=F_1(0)\in \Qbar^*$ by \cite[Lemma 5]{gvalues}. 
Upon replacing $F_1$ with $\kappa^{-1}F_1$, we may assume that $\kappa=1$; then we have $\xi=G(\alpha)=\omega_1$.

To work around $\infty$, we fix  $\alpha\in \Qbar^*$ and keep 
  the same notations; we  consider now $H(z):= G(\frac {\alpha z}{z-\alpha})$ and $H_j(\frac1z):=F_j(\frac {\alpha z}{z-\alpha}-\alpha)$. 
Then $H$ is a non-zero $G$-function solution of a differential operator $M\in \Qbar(z)[\frac{d}{dz}]\setminus\{0\}$ minimal for $H$ (trivially deduced from $L$). Now, $H_1(\frac1z), \ldots, H_\mu(\frac1z)$ is an ACK basis of $M$ at $\infty$ and, by \eqref{eq:glfj}, we have
$$
H(z)=\sum_{j=1}^\lambda \omega_j H_j\Big(\frac1z\Big)
$$
still with $\omega_1=\xi$ and $H(z)\to \xi$ as $z\to \infty$  in a suitable cut plane. 
\end{proof}

\subsection{Analytic continuation of the hypergeometric series ${}_{p+1}F_p(z)$} \label{ssec:analyticont}
In this section, we prove the following result.
\begin{theo} \label{thhyp2} Let  $f(z)= {}_{p+1}F_p[ 
a_1, \ldots, a_{p+1}; 
b_1, \ldots, b_p
; z] $ be a hypergeometric series with parameters $a_j\in \mathbb Q$ and $b_j\in \mathbb Q\setminus\mathbb Z_{\le 0}$. Then, the analytic continuation of $f(z)$ to the domain defined by $\vert \arg(-z)\vert<\pi$ and $\vert z\vert >1$  is given by 
\begin{equation*}
\sum_{j=1}^{p+1}\sum_{\ell} z^{-a_j}\log(1/z)^\ell f_{j,\ell}(1/z)
\end{equation*}
where the sum over the integer $\ell$ is finite and each $f_{j,\ell}(z)\in \h[[z]]$ converges for $\vert z\vert <1$.
 \end{theo}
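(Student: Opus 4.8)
\noindent{\em Plan of proof.} The idea is to run the argument of Proposition~\ref{thhyp} of \S\ref{sec:2}, with the classical expansion of a ${}_{p+1}F_p$ at $z=\infty$ in the role of the large-sector asymptotic expansion of a confluent ${}_pF_p$, and then to check that rationality of the parameters confines every coefficient to $\h$. First I would dispose of the degenerate case: if some $a_j\in\Z_{\le 0}$ then $f$ is a polynomial of degree $N:=\min\{-a_j:\ a_j\in\Z_{\le 0}\}$ with coefficients in $\Qbar$, and choosing $j_0$ with $a_{j_0}=-N$ one has $f(z)=z^{-a_{j_0}}f_{j_0,0}(1/z)$ with $f_{j_0,0}$ a polynomial with algebraic coefficients and all other $f_{j,\ell}=0$; this is of the required form. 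From now on assume no $a_j$ is a non-positive integer.

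The main tool is the Mellin--Barnes representation
\[
f(z)=\frac{\prod_{i=1}^{p}\Gamma(b_i)}{\prod_{i=1}^{p+1}\Gamma(a_i)}\cdot\frac{1}{2\pi i}\int_{\mathcal C}\frac{\prod_{i=1}^{p+1}\Gamma(a_i+s)}{\prod_{i=1}^{p}\Gamma(b_i+s)}\,\Gamma(-s)\,(-z)^{s}\,\dd s ,
\]
valid for $|\arg(-z)|<\pi$, where $\mathcal C$ separates the poles of $\Gamma(-s)$ (at $s\in\Z_{\ge0}$) from those of $\prod_i\Gamma(a_i+s)$ (at $s=-a_i-k$, $k\in\Z_{\ge0}$); this, together with the fact that for $|z|>1$ one may push $\mathcal C$ to the left and collect the residues at the points $s=-a_i-k$, is part of the Barnes/Wright/Meijer theory recalled in \cite[Chapter~V]{luke} and already used in \S\ref{sec:2}. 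I would then carry out exactly the bookkeeping of \S\ref{ssec:simplepoles}--\S\ref{ssec:multiplepoles}: group the $a_j$ into classes congruent modulo $\Z$ with distinguished smallest representatives $c_1,\dots,c_\ell$ of multiplicities $d_1,\dots,d_\ell$, write $\prod_i\Gamma(a_i+s)=P(s)\prod_m\Gamma(c_m+s)^{d_m}$ with $P\in\Q[s]$, so that the residue at $s=-c_n-k$ equals $\frac{1}{(d_n-1)!}\Phi_{c_n,k}^{(d_n-1)}(-c_n-k)$ for a function $\Phi_{c_n,k}$ holomorphic at that point and built out of $P$, the $\Gamma(c_m+s)$ with $m\ne n$, the $\Gamma(b_m+s)$, $\Gamma(-s)$ and $(-z)^{s}$.

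The heart of the matter is to check that, after the $(d_n-1)$-st differentiation, each such residue is a finite sum of monomials $(-z)^{-c_n-k}\log(-z)^{\ell}$ with $0\le\ell\le d_n-1$, times products of values of $\Gamma,\Gamma',\Gamma'',\dots$ and of $1/\Gamma$ at rational points: this is the computation of \S\ref{ssec:multiplepoles}, and by the reflection formula $1/\Gamma(s)=\frac{1}{\pi}\sin(\pi s)\Gamma(1-s)$, by Proposition~\ref{prop:1} (each $\Gamma(r)$, $r\in\Q\setminus\Z_{\le 0}$, being a unit of $\h$) and by \eqref{gauss0} all these numbers belong to $\h$. Summing over $k\ge0$ then gives, for $|z|>1$ and $|\arg(-z)|<\pi$,
\[
f(z)=\sum_{j=1}^{p+1}\sum_{\ell}(-z)^{-a_j}\log(-z)^{\ell}\,\widetilde f_{j,\ell}(1/z)
\]
with finitely many $\ell$ and $\widetilde f_{j,\ell}\in\h[[z]]$ (for $j$ not the smallest in its congruence class one may take $\widetilde f_{j,\ell}=0$, the power $z^{-a_j}$ being already absorbed into the series attached to the smallest representative): indeed the constant term of each $\widetilde f_{j,\ell}$ is the $\h$-number just described, and every later coefficient differs from it by a ratio of Pochhammer symbols with rational arguments, hence still lies in $\h$; moreover each $\widetilde f_{j,\ell}$ is, up to such rational factors, a hypergeometric-type series in $1/z$, so it converges for $|z|>1$.

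Finally I would pass from $-z$ to $z$: the region $\{|z|>1\}\cap\{|\arg(-z)|<\pi\}$ is simply connected, and fixing there the branch with $\arg(z)\in(0,2\pi)$ one gets $\log(-z)=-\log(1/z)-i\pi$ and $(-z)^{-a_j}=e^{i\pi a_j}z^{-a_j}$; since $e^{i\pi a_j}\in\Qbar\subset\h$ and $i\pi\in\h$ (because $\pi=\Gamma(1/2)^2\in\h$), expanding $\log(-z)^{\ell}=(-\log(1/z)-i\pi)^{\ell}$ and absorbing these constants into the coefficients turns the previous display into $f(z)=\sum_{j=1}^{p+1}\sum_{\ell}z^{-a_j}\log(1/z)^{\ell}f_{j,\ell}(1/z)$ with finitely many $\ell$ and $f_{j,\ell}\in\h[[z]]$ convergent for $|z|<1$, which is Theorem~\ref{thhyp2}. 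The one point demanding genuine care is the resonant case, where a congruence class has size $d_n>1$ and one must control, as in \S\ref{ssec:multiplepoles}, both the order in $\log(-z)$ and the $\h$-nature of the coefficients produced by the differentiation; in the non-resonant case the poles are simple and one merely recovers the classical ${}_{p+1}F_p$ connection formula at infinity, whose coefficients $\tfrac{\prod_i\Gamma(b_i)\prod_{i\ne j}\Gamma(a_i-a_j)}{\prod_i\Gamma(a_i)\prod_i\Gamma(b_i-a_j)}$ are visibly in $\h$.
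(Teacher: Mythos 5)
Your proposal is correct and follows essentially the same route as the paper: the Mellin--Barnes representation, pushing the contour left for $|z|>1$ to collect the residues at $s=-a_j-k$, and then exactly the bookkeeping of \S\ref{ssec:simplepoles}--\S\ref{ssec:multiplepoles} (grouping the $a_j$ modulo $\Z$ and differentiating $\Phi_{c_n,k}$ in the resonant case) to see that all coefficients lie in $\h$. Your explicit treatment of the polynomial case and of the branch conversion $(-z)^{s},\log(-z)\mapsto z^{-a_j},\log(1/z)$ only makes explicit steps the paper leaves implicit.
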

\begin{proof}
Let 
$$
R(s)=R(\underline{a}, \underline{b};s):= \frac{\prod_{j=1}^{p+1} \Gamma(a_j+s)}{\prod_{j=1}^p \Gamma(b_j+s)} \Gamma(-s).
$$
The poles of $R(s)$ are located at $-a_j-k$, $k\in \mathbb Z_{\ge 0}$, $j=1, \ldots, p+1$, and at $\mathbb Z_{\ge 0}$. We define the series
$$
M_{p}(\underline{a}, \underline{b};z) :=  
\sum_{j=1}^{p+1}\sum_{k=0}^\infty 
\textup{Residue}\big(R(s)(-z)^{s},s=-a_j-k\big),
$$
which converges for any $z$ such that $\vert \arg(-z)\vert<\pi$ and $\vert z\vert >1$.

Then the analytic continuation of $f$ to the domain defined by $\vert \arg(-z)\vert<\pi$ and $\vert z\vert>1$ is given  by  
$$
f(z) = \frac{\prod_{j=1}^p \Gamma(b_j)}{\prod_{j=1}^{p+1}\Gamma(a_j)}M_p(\underline{a}, \underline{b};z).
$$  
See the discussion in \cite[\S5.3.1]{luke}, and Eqs.~(5) 
and~(17) there in particular. 
The same method as in \S\ref{ssec:simplepoles} and \S\ref{ssec:multiplepoles} shows that 
$$
M_p(\underline{a}, \underline{b};z)=
\sum_{j=1}^{p+1}\sum_{\ell} z^{-a_j}\log(1/z)^\ell f_{j,\ell}(1/z)
$$
where the sum over the integer $\ell\ge 0$ is finite and each $f_{j,\ell}(z)\in \h[[z]]$ converges for $\vert z\vert <1$. This completes the proof. \end{proof}

Note that when the $a_j$'s are pairwise distinct mod $\mathbb Z$, the poles of $R(s)$ at $-a_j-k$, $k\in \mathbb Z_{\ge 0}$ are all distinct and we  simply have
\begin{multline*}
f(z) =  
\sum_{j=1}^{p+1} (-z)^{-a_j} \frac{\prod_{k=1, k\neq j}^{p+1}\Gamma(a_k-a_j)}{\prod_{k=1, k\neq j}^{p+1}\Gamma(a_k)}
\\ \times \frac{\prod_{k=1}^{p}\Gamma(b_k)}{\prod_{k=1}^{p}\Gamma(b_k-a_j)} {}_{p+1}F_p\left[ 
\begin{matrix}
a_j, 1-b_1+a_j,\ldots, 1-b_p+a_{j}
\\
1-a_1+a_j, \ldots * \ldots, 1-a_{p+1} +a_j
\end{matrix}
; -\frac1{z}
\right]
\end{multline*}
where $*$ means that the term $1-a_j+a_j$ is omitted.

\subsection{Proof of Theorem~\ref{thimplication2}}
We start with some general considerations. Let $F(z)$ be a $G$-function   and  $L\in \Qbar(z)[\frac{d}{dz}]\setminus\{0\}$ be its minimal operator. Given a cut plane and $\alpha\in\Qbar^* \cup\{\infty\}$, the local behaviour around $\alpha$ of the analytic continuation of $F$ is described by an ACK basis  of $L$ at $\alpha$. In particular,   if $\vert z\vert $ is large enough, the analytic  continuation of $F$  is of the form
\begin{equation}\label{eq:gcontinfini}
\sum_{e\in E}\sum_{k\in K} \sum_{n\ge 0} c_{e,k,n}z^{-e-n} \log(1/z)^k   
\end{equation}
where $c_{e,\ell,n}\in {\bf G}$, $E\subset \mathbb Q$, $K\subset \mathbb N$ are finite sets (recall that the  connection constants $\omega_j$ in Eq. \eqref{eqconnec} belong to ${\bf G}$, by \cite[Theorem 2]{gvalues}). 
Monodromy around $\infty$ and the singularities of $F$ produces  further coefficients in $\Qbar[\pi]\subset {\bf G}$ for the local expansions of $F$ at these points. Hence any analytic continuation of $F$ is in fact of the form \eqref{eq:gcontinfini} at $\infty$.

Let now $f(z)={}_{p+1}F_p[ 
a_1, \ldots, a_{p+1}; 
b_1, \ldots, b_p
; z] $ be a hypergeometric series with parameters $a_j\in \mathbb Q$ and $b_j\in \mathbb Q\setminus\mathbb Z_{\le 0}$. Let $\mu(z),\lambda(z)\in \overline{\Qbar(z)}$ be holomorphic at $z=0$, with $\lambda(0)=0$ and $\lambda(z)\to \infty$ as $z\to \infty$.  A more precise result than \eqref{eq:gcontinfini} can be obtained for the $G$-function $g(z):=\mu(z)f(\lambda(z))$. We first recall that the analytic continuations of $\mu(z)$ and $\lambda(z)$ in suitable cut planes admit convergent Puiseux expansions at $\infty$ of the form
$$
\sum_{n\ge -m} a_n z^{-n/d} \in \Qbar[[1/z^{1/d}]]
$$
for some integers $m\ge 0$ and $d\ge 1$. Moreover, there exists $n\le -1$ such that $a_n\neq 0$ for $\lambda$ because $\lambda(z)\to \infty$ as $z\to \infty$.  Using Theorem~\ref{thhyp2}, we then deduce that $g(z)$ admits an analytic continuation at $\infty$ of the form \eqref{eq:gcontinfini} with $c_{e,k,n}\in \h$. Again, because $\Qbar[\pi]\subset \h$, any analytic continuation of $g$ is of the form \eqref{eq:gcontinfini} at $\infty$ with $c_{e,k,n}\in \h$.

For $j=1, \ldots, N$, consider $g_j(z):=\mu_j(z)f_j(\lambda_j(z))$ where $f_j(z)$ is a ${}_{p+1}F_p$  hypergeometric series with rational parameters and  $\mu_j(z),\lambda_j(z)\in \overline{\Qbar(z)}$ are holomorphic at $z=0$, with $\lambda_j(0)=0$ and $\lambda_j(z)\to\infty$ as $z \to\infty$. For any polynomial $P\in\Qbar[X_1,\ldots,X_N]$, it follows from the above discussion that any analytic continuation of the $G$-function  $P(g_1(z),\ldots, g_N(z))$ is also of the form  \eqref{eq:gcontinfini} at $\infty$ with $c_{e,k,n}\in \h$. 

Let us now assume that Question \ref{questionG} has a positive answer when all the $\lambda$'s tend to $\infty$ as $z\to \infty$. Given $\xi\in {\bf G}\setminus\{0\}$, consider the non-zero $G$-function $F(z)$ given by Theorem~\ref{theo:new} for $\alpha= \infty$: in a suitable cut plane,  its analytic continuation is of the form \eqref{eq:gcontinfini} with $c_{0,0,0}=\xi$. On the other hand, we have  
$$
F(z)=P\big(g_1(z),\ldots, g_N(z)\big)
$$
in a neighborhood of $z=0$, where the polynomial $P$ and the $g_j$'s are as above. The properties of this specific analytic continuation of $F(z)$ and of those of the right-hand side imply that $\xi\in \h$. Hence ${\bf G}\subset \h$ in this case. 

If the $\lambda$'s all tend to $\infty$ as $z\to \beta \in \Qbar^*$, then the above argument can be adapted using Puiseux expansions of the $\mu$'s and $\lambda$'s of the form $\sum_{n\ge -m} a_n (z-\beta)^{n/d} \in \Qbar[[(z-\beta)^{1/d}]]$, a $G$-function $F$ given by Theorem \ref{theo:new} with $\alpha=\beta$ and an ACK basis at $\beta$.

\medskip

\noindent S. Fischler, Laboratoire de Math\'ematiques d'Orsay, Univ. Paris-Sud, CNRS, Universit\'e
Paris-Saclay, 91405 Orsay, France.

\medskip

\noindent T. Rivoal, Institut Fourier, CNRS et Universit\'e Grenoble Alpes, CS 40700, 38058 Grenoble cedex 9, France.

\medskip

\noindent Keywords: Hypergeometric series, Asymptotic expansions, Siegel's $E$- and $G$-functions.

\medskip

\noindent MSC 2010: 33C15 (Primary); 41A60, 11J91 (Secondary).

\end{document}